\newcommand{\cal}{\mathcal}
\newcommand{\chop}{\dagger}
\def\epsilon{\varepsilon}
\def\phi{\varphi}
\def\hat{\widehat}
\newcommand{\supp}{\mbox{\rm Supp}}
\newcommand{\card}{\mbox{\rm card}}
\newcommand{\FN}{F_N}   
\newcommand{\R}{\mathbb R}
\newcommand{\Z}{\mathbb Z}
\newcommand{\N}{\mathbb N}
\def\strutdepth{\dp\strutbox}
\def \ss{\strut\vadjust{\kern-\strutdepth \sss}}
\def \sss{\vtop to \strutdepth{
\baselineskip\strutdepth\vss\llap{$\diamondsuit\;\;$}\null}}
\def\strutdepth{\dp\strutbox}
\def \sst{\strut\vadjust{\kern-\strutdepth \ssss}}
\def \ssss{\vtop to \strutdepth{
\baselineskip\strutdepth\vss\llap{$\spadesuit\;\;$}\null}}
\def\strutdepth{\dp\strutbox}
\def \ssh{\strut\vadjust{\kern-\strutdepth \sssh}}
\def \sssh{\vtop to \strutdepth{
\baselineskip\strutdepth\vss\llap{$\heartsuit\;\;$}\null}}
\def\qed{\hfill\rlap{$\sqcup$}$\sqcap$\par}
\def\bar{\overline}
\def\strutdepth{\dp\strutbox}
\def \ss{\strut\vadjust{\kern-\strutdepth \sss}}
\def \sss{\vtop to \strutdepth{
\baselineskip\strutdepth\vss\llap{$\diamondsuit\;\;$}\null}}
\def\strutdepth{\dp\strutbox}
\def \sst{\strut\vadjust{\kern-\strutdepth \ssss}}
\def \ssss{\vtop to \strutdepth{
\baselineskip\strutdepth\vss\llap{$\spadesuit\;\;$}\null}}
\def\qed{\hfill\rlap{$\sqcup$}$\sqcap$\par}
\def\¶{\partial}
\newtheorem{thm}{Theorem}[section]
\newtheorem{cor}[thm]{Corollary}
\newtheorem{lem}[thm]{Lemma}
\newtheorem{prop}[thm]{Proposition}
\newtheorem{conjecture}[thm]{Conjecture}
\theoremstyle{definition}
\newtheorem{defn}[thm]{Definition}
\newtheorem{rem}[thm]{Remark}
\newtheorem{defn-rem}[thm]{Definition-Remark}
\theoremstyle{remark}
\numberwithin{equation}{section}
\begin{document} 
 
\author[M.~Lustig]{Martin Lustig}
\address{\tt 
Aix Marseille Universit\'e, CNRS, Centrale Marseille, I2M UMR 7373,
13453  Marseille, France
}
\email{\tt Martin.Lustig@univ-amu.fr}

\title[Complexity function of subshifts and morphisms]
{How do 
topological entropy and factor complexity behave under
monoid morphisms 
and free group basis changes\,?}
 
\begin{abstract} 
For any non-erasing free monoid morphism $\sigma: \cal A^* \to \cal B^*$, and for any subshift $X \subset \cal A^\Z$ and its image subshift $Y = \sigma(X) \subset \cal B^\Z$, 
the associated complexity functions $p_X$ and $p_Y$ 
are shown to satisfy:
there exist constants $c, d, C > 0$ such that
$$c \cdot p_X(d \cdot n) \,\, \leq \,\, p_Y(n) \,\, \leq \,\, C \cdot p_X(n)$$
holds for all sufficiently large integers $n \in \N$, provided that $\sigma$ is recognizable in $X$.
If $\sigma$ is in addition letter-to-letter, then $p_Y$ belongs to $\Theta(p_X)$ (and conversely). Otherwise, however, there are examples where $p_X$ is not in  $\cal O(p_Y)$.

It follows that in general the value $h_X$ of the topological entropy of $X$ is not preserved when applying a morphism $\sigma$ to $X$, even if $\sigma$ is recognizable in $X$.

As a consequence, there is no meaningful way to define the topological entropy of a current on a free group $F_N$; only the distinction of currents $\mu$ with topological entropy $h_{\tiny\supp(\mu)} = 0$ and $h_{\tiny\supp(\mu)} > 0$ is well defined.
\end{abstract}

\subjclass[2010]{Primary 37B10, Secondary 20F65, 37E25}
 
\keywords{recognizable morphism, topological entropy, complexity function, subshifts}
 
\maketitle 

\section{Introduction}

Let $\cal A$ be a non-empty finite set, called an {\em alphabet}. We denote by $\cal A^*$ the free monoid over $\cal A\,$; its elements $w = x_1 \ldots x_n$ (with all $x_i \in \cal A$) are called {\em words in $\cal A$}, and $|w| = n$ is the {\em length} of $w$. In analogy we call an element ${\bf x}$ of the {\em shift space} $\cal A^\Z$ a {\em biinfinite word} in $\cal A$ and write it as 
\begin{equation}
\label{eq1.1-}
{\bf x} = \ldots x_{n-1} x_n x_{n+1} \ldots \quad\text{(with $x_n \in \cal A$ for any $n \in \Z$).}
\end{equation}

A non-empty subset $X \subset \cal A^\Z$ is called a {\em subshift
(over $\cal A$)} 
if it is closed with respect to the product topology on $\cal A^\Z$ 
(for the discrete topology on $\cal A$), 
and if it is invariant under the shift operator $T_\cal A$ (which acts on $\cal A^\Z$ through decreasing by 1 
all indices in any biinfinite word ${\bf x}$ 
as in (\ref{eq1.1-}). 

The set of {\em factors} $x_\ell \ldots x_m \in \cal A^*$ of any ${\bf x} \in X$ is called the {\em language} of the subshift $X$ and is denoted by $\cal L(X)$. Conversely, for any infinite set $\cal L \subset \cal A^*$ we denote by $X(\cal L)$ the subshift {\em generated by} $\cal L$, which is defined as the set of all ${\bf x} \in \cal A^\Z$ for which every factor is also a factor of some $w \in \cal L$. 

We denote by $\Sigma(\cal A)$ the set of all subhifts $X \subset \cal A^\Z$, and by $\Lambda(\cal A)$ the set of infinite subsets $\cal L \subset \cal A^*$ 
that are {\em factorial} (i.e. every factor of some $w \in \cal L$ also belongs to $\cal L$) and {\em bi-extendable} (i.e. every $w \in \cal L$ occurs also as factor in some $u \in \cal L$, but neither as prefix nor as suffix of $u$). Then the maps $X \mapsto \cal L(X)$ and $\cal L \mapsto X(\cal L)$ defines a well known canonical bijection:
$$\Sigma(\cal A) \,\,\, \longleftrightarrow \,\,\, \Lambda(\cal A)$$

The issuing double-nature of the basic objects in symbolic dynamics is on one hand the deep reason for the astonishing richness of this beautiful mathematical domain; on the other hand it is also the source of certain basic ``misunderstandings'', some of which 
are even up to date not completely straightened out (see for instance section 2 of  \cite{BHL2.8-I}). 

One of these problems comes from the notion of a ``morphism'', which has indeed two conflicting natural interpretations, for any second alphabet $\cal B$ and any second subshift $Y \subset \cal B^\Z$:  
\begin{enumerate}
\item
If we think of $X \in \Sigma(\cal A)$ as topological dynamical system $(X, T_\cal A)$, then a morphism $(X, T_\cal A) \mapsto (Y, T_\cal B)$ is given by a continuous map $\theta: X \to Y$ which commutes with the shift operators: 
$$T_\cal B \circ \theta = \theta \circ T_\cal A$$

\item
If, instead, we consider primarily the subshift language $\cal L(X) \subset \cal A^*$, then any non-erasing monoid morphism $\sigma: \cal A^* \to \cal B^*$ defines an infinite image set $\sigma(\cal L(X)) \subset \cal B^*$ which in turns gives rise to the {\em image subshift} $$\sigma(X) := X(\sigma(\cal L(X))\, .$$
\end{enumerate}

Recall here that any monoid morphisms $\sigma: \cal A^* \to \cal B^*$ is determined by the choice of the finite family of elements $\sigma(a_i) \in \cal B^*$ for any $a_i \in \cal A$, and that conversely, any such choice defines a monoid morphism.  The morphism $\sigma$ is {\em non-erasing} if none of the $\sigma(a_i)$ is the empty word. The morphism $\sigma$ is said to be {\em recognizable in $X$} if, roughly speaking, any biinfinite word in $\sigma(X)$ can be lifted via $\sigma$ to a biinfinite word in $X$ in at most one way. The precise definition is bit tedious and delayed here until section \ref{sec:3} (see Definition \ref{3.0a}).

\begin{rem}
\label{2-notions}
There is a natural intersection of the two notions (1) and (2) above, given by {\em letter-to-letter morphisms} $\sigma: \cal A^* \to \cal B^*$, which are monoid morphisms subject to the condition that $|\sigma(a_i)| = 1$ for any letter $a_i \in \cal A$. Indeed, as explained in the subsequent paragraph, any ``morphism'' in the sense of (1) above can canonically be reduced to a letter-to-letter morphisms in the meaning of (2). This is the reason why we adopt in this paper the wider interpretation (2) above whenever the term ``morphism'' will be used in the sequel.

A classical argument based on the celebrated Curtis-Hedlund-Lyndon theorem shows that any continuous map $\theta$ as in (1) above is induced by a letter-to-letter morphism $\sigma: \cal A_n^* \to \cal B^*$ with $Y = \sigma(X')$ and $X' = \rho_{n,k}^{-1}(X)$, where $\cal A_n = \{w \in \cal A^* \mid |w| = n\}$ and (for any $1\leq k\leq n$) the map $\rho_{n,k}:\cal A_n^* \to \cal A^* \, , w = x_1 \ldots x_n \mapsto x_k$ is a ``sliding block code'' morphism,
which canonically 
induces a homeomorphism $(X', T_{\cal A_n}) \mapsto (X, T_\cal A)$ 
for any integers $n$ and $k$ as above.
\end{rem}

The number of factors $x_\ell \ldots x_m$ of length $n := m-\ell$ of any ${\bf x} \in X$ as in (\ref{eq1.1-}) is denoted by $p_X(n)$; the issuing function
\begin{equation}
\label{eq1.2a}
p_X: \N \to \N \, , \,\, n \mapsto p_X(n)
\end{equation}
is called the {\em complexity function} (or {\em combinatorial complexity} or {\em factor complexity}) of the subshift $X$.

The complexity function $p_X$ has been investigated ever since symbolic dynamics has started out with the work of Morse and Hedlund in the 1930's. It is by now one of the most prominent tools in the study of subshifts; too many results are known to even start listing them here. A classification of $X$ according to the growth type of the monotonously growing function $p_X$ has turned out to be very fruitful, but many delicacies (for instance the potential discrepancy between $\limsup p_X(n)$ and  $\liminf p_X(n)$) come into play and still occupy the symbolic dynamics community until the very present. 

One reason of its importance is that the complexity function is a refinement of the {\em topological entropy} $h_X$ of a subshift $X$. This invariant, defined also in far more general contexts, turns out to be related to the complexity function by the following equality:
\begin{equation}
\label{eq1.3}
h_X \,\, = \,\, \lim_{n\to \infty}\frac{\log p_X(n)}{n}
\end{equation}
Grosso modo it seems fair to say that subshifts $X$ with entropy $h_X > 0$ are ``large''; for instance they do occur naturally in the context of regular languages. It is for the ``small'' subshifts $X$, i.e. with entropy $h_X = 0$, that the complexity function serves as finer measure for the seize of $X$. There is also a very interesting and not so well understood ``grey area'' where one has $h_X = 0$, but other invariants like the rank of $X$ or the number $e(X)$ of ergodic probability measures on $X$ indicate that $X$ behaves a lot more like what one knows from the positive entropy case, rather than for example from subshifts with linear complexity.

\medskip

We are now ready to state the main result of this note:

\begin{thm}
\label{1.1}
Let $\cal A$ and $\cal B$ be non-empty finite alphabets, and let $X \subset \cal A^\Z$ be a subshift. Let $\sigma: \cal A^* \to \cal B^*$ be a non-erasing morphism of free monoids, and let $Y:= \sigma(X)$ be the image subshift of $X$. Then the associated complexity functions $p_X$ and $p_Y$ satisfy the following:
\begin{enumerate}
\item
There exists a constant $C > 0$ such that 
$$
p_Y(n) \,\, \leq \,\, C \cdot p_X(n)
$$
for all 
$n \in \N$. Indeed, we can specify the constant to $C = \max\{|\sigma(a_i)| \mid a_i \in \cal A\}$.

\item
If $\sigma$ is recognizable in $X$ and letter-to-letter, then 
there exists a constant $c > 0$ such that
$$
c \cdot p_X(n) \,\, \leq \,\, p_Y(n)
$$
for all sufficiently large integers $n \in \N$.

\item
If $\sigma$ is recognizable in $X$ (but not necessarily letter-to-letter), then 
there exist constants $c > 0$ and $d > 0$ such that
$$
c \cdot p_X(d\cdot n) \,\, \leq \,\, p_Y(n)
$$
for all sufficiently large integers $n \in \N$.
\item
There exist subshifts $X \subset \cal A^\Z$ and morphisms $\sigma$ which are recognizable in $X$ such that for any constant $c > 0$ one has
$$
c \cdot p_X(n) \,\, > \,\, p_Y(n)
$$
for infinitely many 
integers $n \in \N$.
\end{enumerate}
\end{thm}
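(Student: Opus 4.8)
The plan is to prove part (4) by exhibiting a single explicit pair $(X,\sigma)$ for which the quotient $p_X(n)/p_Y(n)$ tends to $\infty$; this is in fact stronger than the displayed statement (which only asks for $c\cdot p_X(n) > p_Y(n)$ infinitely often), and as a bonus it will exhibit the failure of entropy preservation announced in the introduction. I would take $\cal A = \{a,b\}$ and let $X = \cal A^\Z$ be the full $2$-shift, so that $p_X(n) = 2^n$ and $h_X = \log 2$. Over $\cal B = \{0,1,2\}$ I would define the non-erasing, non-letter-to-letter morphism $\sigma\colon \cal A^* \to \cal B^*$ by $\sigma(a) = 01$ and $\sigma(b) = 02$. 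Note that this is consistent with parts (1) and (3): the upper bound holds with $C = 2 = \max\{|\sigma(a_i)|\}$, and the lower bound holds with $d = \tfrac12$ (a legitimate choice, since $d$ is only required to be positive, not $\geq 1$).

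First I would verify that $\sigma$ is recognizable in $X$. The image subshift $Y = \sigma(X) = X(\sigma(\cal L(X)))$ is precisely the set of biinfinite concatenations of the two blocks $01$ and $02$. In any such word the letter $0$ occurs exactly at the left endpoints of these blocks, so the positions of the $0$'s determine the block decomposition unambiguously, and each block $0s$ (with $s \in \{1,2\}$) has the unique $\sigma$-preimage letter $a$ (for $s=1$) or $b$ (for $s=2$). Hence every biinfinite word of $Y$ lifts through $\sigma$ to at most one biinfinite word of $X$, which is what recognizability requires; I would match this marker-letter argument to the formal Definition \ref{3.0a}.

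Next I would bound $p_Y$ from above. Since $\sigma$ has constant length $2$ and $Y$ is the block subshift just described, any factor $u$ of $Y$ of length $n$ is determined by its phase $\phi \in \{0,1\}$ (its starting position inside a block) together with a word $w \in \cal L(X)$ of length at most $\lceil n/2\rceil + 1$ such that $u$ is a factor of $\sigma(w)$. This phase count gives $p_Y(n) \leq 2\, p_X\!\left(\lceil n/2\rceil + 1\right) \leq 2^{\,n/2 + 3}$ for all $n$. Comparing with $p_X(n) = 2^n$ yields $p_X(n)/p_Y(n) \geq 2^{\,n/2 - 3} \to \infty$, equivalently $p_Y(n)/p_X(n) \to 0$; hence for every $c > 0$ one has $c\, p_X(n) > p_Y(n)$ for all sufficiently large $n$, and in particular for infinitely many $n$, as claimed. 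The same estimate (together with part (1)) pins down $p_Y(n) \in \Theta(2^{n/2})$, so $h_Y = \tfrac12\log 2 \neq h_X$, which is exactly the entropy-non-preservation phenomenon the introduction promises.

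The routine steps here are the upper bound on $p_Y$ (an elementary phase count) and the arithmetic of the exponential quotient; I expect no real difficulty in either. The one point deserving genuine care is the verification of recognizability against the precise Definition \ref{3.0a}: I must confirm that the intuitive statement ``the letter $0$ marks every block boundary'' really produces a \emph{unique} lift in the exact sense used later in the paper, rather than merely a lift that is unique up to a global shift or up to finitely many ambiguous positions. This is the step I would write out in full detail.
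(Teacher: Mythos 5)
Your argument for part (4) is correct, and it is essentially the paper's own counter-example: the paper also takes the full shift over a two-letter alphabet and applies a constant-length-$2$ morphism that is recognizable there (the subdivision morphism $a_i\mapsto a_i^-a_i^+$, where recognizability comes from the subdivision structure rather than from your marker letter $0$), then compares $p_Y(n)\approx 2\,p_X(n/2)$ with $p_X(n)=2^n$. Your marker-based verification of recognizability against Definition \ref{3.0a} is sound: the parity of the positions of the letter $0$ pins down the shift offset $k$, and each block $01$ or $02$ has a unique preimage letter, so both the lift and $k$ are unique. The phase count giving $p_Y(n)\leq 2\,p_X(\lceil n/2\rceil+1)$ is the same elementary estimate as the paper's Lemma \ref{X.1} specialized to $\langle\sigma\rangle=\|\sigma\|=2$.

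The genuine gap is one of coverage rather than correctness: the statement to be proved is all four assertions of Theorem \ref{1.1}, and your proposal establishes only assertion (4). Assertion (1) is the easy counting bound (Lemma \ref{X.1}), but assertions (2) and (3) --- the latter being what the paper calls the main part of the theorem --- require an actual argument that recognizability forces a \emph{lower} bound on $p_Y$, and this is where all the work lies. The paper's route is to decompose $\sigma=\alpha_\sigma\circ\pi_\sigma$ into a subdivision morphism followed by a letter-to-letter morphism (Remark \ref{canonical-decomposition}), to invoke the characterization of recognizability by the existence of a repetition bound $r$ for $\alpha_\sigma$ on $\pi_\sigma(X)$ (Proposition \ref{3.2}), and then to chain the inequalities $p_Y(n+2r)\geq p_Z(n)$, $p_Z(n+2r)\leq \card(\cal A_\sigma)^{2r}p_Z(n)$ and $p_Z(\|\sigma\|\cdot n)\geq p_X(n)$ to get Proposition \ref{3.5}; none of this is reconstructed or replaced in your proposal. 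Checking that your single example is ``consistent with'' parts (1) and (3) does not prove those parts, since they are universal statements over all $X$ and all recognizable $\sigma$. To complete the proof you would need to supply the lower-bound machinery (or an equivalent), not just the counter-example.
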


Recall 
that 
classical analysis symbols due to Landau and others have given rise to the following (here slightly modernized) terminology:
for any functions $f: \N \to \R_{>0}$ and $g: \N \to \R_{>0}$ we write $f \in \Theta(g)$ if an only if there exist constants $c > 0$ and  $C > 0$ such that
$$c \cdot g(n) \,\, \leq\,\,  f(n) \,\, \leq\,\,  C \cdot g(n)$$
holds for all sufficiently large $n \in \N$. This is equivalent to stating
$0 <  \underset{n \to\infty}{\liminf} \frac{f(n)}{g(n)}  \leq  \underset{n \to\infty}{\limsup} \frac{f(n)}{g(n)}  <  \infty \,$. 
It follows directly that the property $f \in \Theta(g)$ defines an equivalence relation on the set $\cal N := \R_{>0}^\N$ of all functions from $\N$ to $\R_{>0}$, where the equivalence class of any $f \in \cal N$ is precisely the set $\Theta(f)$. 

[Aside: Note that the classical equivalence relation $\sim$ (as used in calculus for real functions) is stronger, so that each of its equivalence classes in $\cal N$ is contained in some $\Theta(f)$.]

\medskip

We can thus summarize 
part of Theorem \ref{1.1} as follows:

\begin{cor}
\label{1.3}
Let $X \subset \cal A^\Z$ be a subshift, and let $\sigma: \cal A^* \to \cal B^*$ be a non-erasing morphism which is recognizable in $X$. Then one has:
\begin{enumerate}
\item
If $\sigma$ is letter-to-letter, then $p_{\sigma(X)} \in \Theta(p_X)$.

\item
If $\sigma$ is not letter-to-letter, then $p_{\sigma(X)} \in \cal O(p_X)$, but in general one has $p_{\sigma(X)} \notin \Theta(p_X)$.
\qed
\end{enumerate}
\end{cor}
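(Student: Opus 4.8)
The plan is to derive the corollary directly from Theorem~\ref{1.1} by rephrasing its one-sided inequalities in the asymptotic language of $\Theta$ and $\cal O$ recalled just before the statement. The recognizability hypothesis is exactly what permits us to invoke parts (2)--(4) of the theorem, while part (1) is unconditional. Throughout I write $Y = \sigma(X)$, so that $p_{\sigma(X)} = p_Y$.

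For statement (1), assume $\sigma$ is letter-to-letter and recognizable in $X$. Theorem~\ref{1.1}(1) gives a constant $C > 0$ with $p_Y(n) \leq C \cdot p_X(n)$ for all $n \in \N$, hence $p_Y \in \cal O(p_X)$. Since $\sigma$ is letter-to-letter and recognizable, Theorem~\ref{1.1}(2) supplies a constant $c > 0$ with $c \cdot p_X(n) \leq p_Y(n)$ for all sufficiently large $n$. Chaining the two bounds yields
$$c \cdot p_X(n) \,\, \leq \,\, p_Y(n) \,\, \leq \,\, C \cdot p_X(n)$$
for all large $n$, which is precisely the defining condition for $p_Y \in \Theta(p_X)$, i.e. $p_{\sigma(X)} \in \Theta(p_X)$.

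For statement (2), assume $\sigma$ is recognizable in $X$ but not letter-to-letter. The upper bound $p_{\sigma(X)} \in \cal O(p_X)$ again follows at once from Theorem~\ref{1.1}(1), whose constant $C = \max\{|\sigma(a_i)| \mid a_i \in \cal A\}$ does not rely on the letter-to-letter hypothesis. For the assertion that $\Theta$-membership can fail, I would argue by contradiction on the examples furnished by Theorem~\ref{1.1}(4). If one had $p_{\sigma(X)} \in \Theta(p_X)$ for such an example, then in particular there would be a constant $c > 0$ with $c \cdot p_X(n) \leq p_Y(n)$ for all sufficiently large $n$; but Theorem~\ref{1.1}(4) asserts that for this very constant $c$ the reverse strict inequality $c \cdot p_X(n) > p_Y(n)$ holds for infinitely many $n$, a contradiction. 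Hence $p_{\sigma(X)} \notin \Theta(p_X)$ for these examples, establishing the ``in general'' clause.

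No genuine obstacle arises at this stage: all the analytic content is already packaged in Theorem~\ref{1.1}, and the corollary is a matter of reading its four inequalities through the $\Theta$/$\cal O$ dictionary. The only point requiring a moment's care is that the lower bound would have to hold with the \emph{same} constant $c$ that a hypothetical $\Theta$-membership produces, which is exactly why Theorem~\ref{1.1}(4) is phrased as a ``for any constant $c$'' statement rather than for a single fixed one; equivalently, part (4) says $\liminf_{n\to\infty} p_Y(n)/p_X(n) = 0$, which directly rules out the positive lower bound that $\Theta$ demands.
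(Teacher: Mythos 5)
Your proposal is correct and matches the paper's intent exactly: the paper states Corollary~\ref{1.3} as an immediate summary of Theorem~\ref{1.1} (it carries a \qed with no separate argument), and your derivation --- combining parts (1) and (2) of the theorem for the $\Theta$-membership, and using part (4) to contradict any putative lower bound constant --- is precisely the intended reading. The closing observation that part (4) is equivalent to $\liminf p_Y(n)/p_X(n)=0$ is a correct and clean way to see why $\Theta$-membership fails.
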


The main part of Theorem \ref{1.1}, namely statement (3), will be derived below in subsection \ref{sec:3.2}. The proof is based on previous work of the author on recognizable morphisms, quoted and explained in subsection \ref{sec:3.1}. In section \ref{sec:4} we observe that the inequality 
from statement (2) 
follows indeed already from what has been done in section \ref{sec:3}. We then proceed to give a concrete counter-example to this stronger inequality when the ``letter-to-letter'' hypothesis is missing, thus proving statement (4). The morphism used there is simply given by a decomposition of every alphabet letter $a_i$ as product $a_i = a_i^{-} a_i^{+}$, and for $X$ we can take the full shift $\cal A^\Z$.
For completeness and as warm-up we give in subsection \ref{sec:2} a quick proof of part (1) of Theorem \ref{1.1}, although this is certainly folk knowledge among the experts. 
In section \ref{sec:5} we discuss some consequences (for instance for the 
topological entropy) and also draw the connection to algebraic laminations and currents for free groups $F_N$ of finite rank $N \geq 2$.
More precisely, we show (for the terminology see section \ref{sec:5}):

\begin{prop}
\label{5.2}
Let $F_N$ be a free group of finite rank $N \geq 2$, and let $L$ be an algebraic lamination in $F_N$. Let $X_\pm = X_\cal A(L) \subset \cal A_\pm^\Z$ and $Y_\pm = X_\cal B(L) \subset \cal B_\pm^\Z$ be the subshifts associated to $L$ via choices of bases $\cal A$ and $\cal B$ of $F_N$ respectively. Then the complexity functions $p_{X_\pm}$ and $p_{Y_\pm}$ satisfy: 
\begin{enumerate}
\item
There exist constants $c, d, C, D > 0$ such that one has
$$c \cdot p_{X_\pm}(\lfloor d \cdot n \rfloor) \,\, \leq \,\, p_{Y_\pm}(n) \,\, \leq \,\, C \cdot p_{X_\pm}(D\cdot n)$$
for any sufficiently large integer $n \geq 0$. 
(Here $\lfloor d\cdot n \rfloor$ denotes as usual the largest integer $m \leq d\cdot n$.)

\item
In general the statement (1) fails if in addition one imposes $c = d = 1$. In particular, there are bases $\cal A$ and $\cal B$ and algebraic laminations $L$ in $F_N$ such that
$$p_{Y_\pm} \notin \Theta(p_{X_\pm})\, .$$
\item
There is no well defined notion of a ``topological entropy'' $h_L \in \R$ for an algebraic lamination $L$ in $F_N$. However, the statements
$$h_L \,\, = \,\, 0 \quad\text{or} \quad h_L \,\, > \,\, 0$$
are well defined, since one has $h_{X_\pm} = 0 \Longleftrightarrow h_{Y_\pm} = 0$.
\end{enumerate}
\end{prop}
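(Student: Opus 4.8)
The plan is to deduce all three statements from Theorem \ref{1.1}, by realizing each change of basis as a non-erasing morphism that is recognizable in the relevant subshift, and then reading off the entropy consequences from the complexity bounds.

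First I would set up the combinatorial model of the change of basis. Writing each letter $a_i \in \cal A_\pm$ as the reduced word over $\cal B_\pm$ that represents the same element of $\FN$ (and symmetrically each $b_j \in \cal B_\pm$ as a reduced word over $\cal A_\pm$) yields two non-erasing monoid morphisms $\sigma \colon \cal A_\pm^* \to \cal B_\pm^*$ and $\tau \colon \cal B_\pm^* \to \cal A_\pm^*$. Here the composite ``$\sigma$ followed by free reduction'' is exactly the identity map of $\FN$ read from $\cal A$-coordinates into $\cal B$-coordinates, which is bi-Lipschitz but not isometric, and this non-isometry is the ultimate source of the constants $d, D \neq 1$. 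The crucial point, and the main obstacle, is that the image subshift $\sigma(X_\pm)$ in the sense of interpretation (2) of the Introduction is built from the \emph{unreduced} concatenations $\sigma(w)$, whereas $Y_\pm = X_\cal B(L)$ consists of freely reduced (geodesic) words over $\cal B_\pm$; thus $\sigma(X_\pm) \neq Y_\pm$ in general. I would bridge this gap via the bounded cancellation lemma (Cooper): there is a constant $K$, depending only on the basis change, so that free reduction of any $\sigma(w)$ deletes at most $K$ letters on each side of each seam between consecutive blocks $\sigma(a_i)$. Hence the passage from $\sigma(X_\pm)$ to $Y_\pm$ is a sliding-block recoding of bounded window $O(K)$ with linearly comparable lengths, which alters the complexity function only up to $\Theta$-equivalence together with a linear distortion of the argument; the same applies to $\tau$. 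Moreover $\sigma$ and $\tau$ are recognizable in $X_\pm$ and $Y_\pm$ respectively, since the identity of $\FN$ acts bijectively on $\partial^2 \FN$, so a reduced image line lifts back to $L$ in at most one way.

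With these identifications in place, statement (1) follows by applying Theorem \ref{1.1}(3) in both directions. Applied to $\sigma$ it produces constants $c, d > 0$ with $c \cdot p_{X_\pm}(\lfloor d \cdot n \rfloor) \leq p_{Y_\pm}(n)$ for large $n$, which is the lower bound. Applied to $\tau$ it produces constants $c', d' > 0$ with $c' \cdot p_{Y_\pm}(\lfloor d' \cdot n \rfloor) \leq p_{X_\pm}(n)$; setting $m = \lfloor d' \cdot n \rfloor$ and using the monotonicity of $p_{X_\pm}$ rewrites this as an upper bound $p_{Y_\pm}(n) \leq C \cdot p_{X_\pm}(D \cdot n)$ with $C = 1/c'$ and $D$ slightly above $1/d'$, as required, the bounded-recoding factors from the previous paragraph being absorbed into the four constants. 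Statement (3) is then immediate from the entropy formula (\ref{eq1.3}): taking $\frac{1}{n}\log(\cdot)$ in the two-sided bound of (1) and letting $n \to \infty$ gives $d \cdot h_{X_\pm} \leq h_{Y_\pm} \leq D \cdot h_{X_\pm}$ with $d, D > 0$, whence $h_{X_\pm} = 0 \Longleftrightarrow h_{Y_\pm} = 0$, while the numerical value is not pinned down once $d$ or $D$ differs from $1$.

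For statement (2) I would exhibit a lamination of positive entropy whose exponential growth rate genuinely depends on the chosen basis. The point is that every free basis of $\FN$ yields the same $2N$-regular Cayley tree, yet the identity of $\FN$ read in the two word metrics is bi-Lipschitz and not an isometry; choosing $L$ concentrated along the directions that this map stretches forces $h_{Y_\pm} = D \cdot h_{X_\pm}$ with $D \neq 1$ and $h_{X_\pm} > 0$. Since any two $\Theta$-equivalent functions have equal entropy by (\ref{eq1.3}), the inequality $h_{Y_\pm} \neq h_{X_\pm}$ already forces $p_{Y_\pm} \notin \Theta(p_{X_\pm})$ and, a fortiori, rules out the bound of (1) with $c = d = 1$. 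This is the lamination-theoretic counterpart of the failure of $\Theta$-equivalence in Theorem \ref{1.1}(4). I expect the delicate part of (2) to be precisely the verification that the basis change really changes the entropy, i.e.\ that the scaling constant $D$ can be made different from $1$; this calls for an explicit choice of $L$ together with a short growth estimate in the two metrics, rather than any of the general machinery used for (1) and (3).
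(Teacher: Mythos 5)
Your overall architecture is close to the paper's in spirit --- Cooper's bounded cancellation, a two-directional/symmetric argument for (1), the entropy formula for (3), and a positive-entropy lamination stretched by a basis change for (2) --- but the two steps you defer are exactly where the content lies, and one of them is a genuine gap. For (1), the paper does \emph{not} route the argument through Theorem \ref{1.1}(3). The obstacle you correctly identify, namely that the monoid-image subshift $\sigma(X_\pm)$ consists of unreduced concatenations and so differs from $Y_\pm$, is not resolved by declaring the passage from $\sigma(X_\pm)$ to $Y_\pm$ to be ``a sliding-block recoding of bounded window $O(K)$'': free reduction deletes a variable (though bounded) number of letters at each seam, so it is not a sliding-block code, it does not commute with the shift in the naive indexing, and the claim that it distorts the complexity function only by a $\Theta$-constant and a linear rescaling of the argument is precisely the statement that has to be proved. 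The paper avoids introducing $\sigma(X_\pm)$ altogether: it works directly with the reduced representatives $\phi_{\cal B,\cal A}(w)$, proves a ``general observation'' controlling how chopping interacts with $\phi_{\cal B,\cal A}$, and then counts, for each $u''\in\cal L(X_\pm)$ of controlled length, the boundedly many positions (at most $4C(\cal A,\cal B)\,\|\phi_{\cal B,\cal A}\|+1$) at which a given reduced factor of $\cal L(Y_\pm)$ can sit inside $\phi_{\cal B,\cal A}(u'')$. That counting yields $p_{Y_\pm}(n)\leq C\,p_{X_\pm}(Dn)$ directly, and the left inequality then follows by swapping the roles of $\cal A$ and $\cal B$. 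Note also that the paper explicitly leaves as Conjecture \ref{4.5+} the assertion that $X_\cal A(L)$ and $X_\cal B(L)$ are related through recognizable morphisms from a common subshift; your reduction implicitly assumes a form of this. Your justification of recognizability of $\sigma$ in $X_\pm$ via injectivity on $\partial^2\FN$ is likewise aimed at the reduced words rather than at the unreduced image subshift that Definition \ref{3.0a} actually concerns.

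For (2) you have the right idea but stop short of the example, and the example is the trick. The paper takes $\sigma$ to be a \emph{positive invertible} substitution with all images of length at least $2$ (e.g.\ $\sigma_{\rm Fib}^2$) and $X$ the full positive shift $\cal A^\Z$: positivity means no cancellation ever occurs, so the monoid image $\sigma(X)$ literally equals the group-theoretic image, $X_\pm$ and $Y_\pm$ are the doubles of $X$ and $Y$ with $p_{X_\pm}=2p_X$ and $p_{Y_\pm}=2p_Y$, and Lemma \ref{5.3} immediately gives $p_{X_\pm}\notin\cal O(p_{Y_\pm})$. Without this device your phrase ``choosing $L$ concentrated along the directions that this map stretches'' has no concrete content, and the ``short growth estimate in the two metrics'' you postpone is not shorter than the rest of the proof. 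Your derivation of (3) from (1) and (2) is fine once those are in place, and matches the paper's.
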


\bigskip

\noindent
{\em Motivation for this paper:} 
There are two reasons why the author addresses the 
topics 
of this paper (which aren't really his main expertise):

\smallskip
\noindent
(1)  
The author freely admits that for quite some time he thought that ``$\sigma$ is recognizable in $X$'' ought to imply $p_{\sigma(X)} \in \Theta(p_X)$. Indeed, a detailed ``proof'' only failed in its last inequality, and only then the search for a counter-example started. We suspect that we are not the only one who may fall into this trap; one purpose of this scholarly note is 
to warn our colleagues and to straighten out the whole issue.

\smallskip
\noindent
(2)  
Complexity functions of subshifts belong to the very useful toolbox that has been extensively developed over the years in symbolic dynamics, while being essentially ignored by the cousin community in geometric group theory (where in particular my fellow researchers on automorphisms of free groups and Outer space ought to know better). I hence made a purposeful effort to make this note accessible for anybody with only a sketchy background in symbolic dynamics, probably at the expense of boring occasionally the experts from this field.

\medskip

\noindent
{\em Acknowledgements:} We would like to thank Nicolas B\'edaride, Arnaud Hilion and Gilbert Levitt for inspiring conversations in the context of the subject treated in this note. 
We also would like to point the reader's attention to forthcoming work of Hilion-Levitt \cite{HL} which is in part related to the material of this note. In particular, a statement very close to part (1) of Proposition \ref{5.2} above is proved there by rather different methods.

\section{The upper and the lower bound for the image complexity function}
\label{sec:3}

\subsection{The upper bound}
\label{sec:2}

${}^{}$

\smallskip

Let $\sigma: \cal A^* \to \cal B^*$ be any morphism of free monoids over finite non-empty alphabets $\cal A$ and $\cal B$ respectively. 
It has become common use to denote by $||\sigma||$ the maximum and by $\langle \sigma \rangle$ the minimum of the lengths 
$|\sigma(a_i)|$ 
of the letter images $\sigma(a_i) \in \cal B^*$, for any $a_i \in \cal A$. We observe directly the following:

\begin{lem}
\label{X.1}
Let $\sigma: \cal A^* \to \cal B^*$ a non-erasing morphism, and let $X \subset \cal A^\Z$ be any subshift over $\cal A$, with image subshift $Y := \sigma(X) \subset \cal B^\Z$. 
Then the complexity functions $p_X$ and $p_Y$ satisfy
$$
p_Y(n) \,\, \leq \,\,  p_Y(\langle \sigma \rangle \cdot (n-1) +1) \,\,  \leq \,\,  ||\sigma|| \cdot p_X(n)$$
for any integer $n \in \N$.
\end{lem}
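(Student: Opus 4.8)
The plan is to prove the two displayed inequalities separately, the first being essentially free and the second carrying all the content. For the first inequality I would invoke the fact that the complexity function of any subshift is monotonically non-decreasing: since the language $\cal L(Y)$ is bi-extendable, every factor of length $n$ admits a right-extension to a factor of length $n+1$, and distinct factors yield distinct extensions, so that $p_Y(n) \leq p_Y(n+1)$ for all $n$. Because $\sigma$ is non-erasing we have $\langle \sigma \rangle \geq 1$, hence $\langle \sigma \rangle \cdot (n-1) + 1 \geq n$, and the first inequality $p_Y(n) \leq p_Y(\langle \sigma \rangle \cdot (n-1)+1)$ then follows immediately from monotonicity.

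For the second inequality, set $m := \langle \sigma \rangle \cdot (n-1) + 1$. The idea is to encode each length-$m$ factor $v$ of $Y$ by a pair $(w,t)$ consisting of a length-$n$ factor $w \in \cal L(X)$ and an offset $t$, and then to count the admissible pairs. First I would observe that every $v \in \cal L(Y)$ with $|v| = m$ occurs as a factor of $\sigma({\bf x}) = \ldots \sigma(x_{-1})\sigma(x_0)\sigma(x_1)\ldots$ for some biinfinite word ${\bf x} = \ldots x_{-1}x_0 x_1 \ldots \in X$; this follows from the definition $Y = X(\sigma(\cal L(X)))$ together with the remark that each finite factor of ${\bf x}\in X$ belongs to $\cal L(X)$. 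Let the first letter of $v$ sit at offset $t$ inside the block $\sigma(x_j)$, so that $0 \leq t \leq |\sigma(x_j)| - 1 \leq ||\sigma|| - 1$, and put $w := x_j x_{j+1} \cdots x_{j+n-1}$, a factor of ${\bf x}$ of length $n$.

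The crux is the length bookkeeping showing that these $n$ letters already suffice to spell out all of $v$. Since each block of $\sigma$ has length at least $\langle \sigma \rangle$, we get $|\sigma(w)| \geq |\sigma(x_j)| + (n-1)\langle \sigma \rangle$, while the last position occupied by $v$, measured from the start of $\sigma(x_j)$, is $t + m - 1 \leq (|\sigma(x_j)| - 1) + \langle \sigma \rangle \cdot (n-1) = |\sigma(x_j)| + (n-1)\langle \sigma \rangle - 1 \leq |\sigma(w)| - 1$. Hence every position of $v$ lies within $\sigma(w)$, so $v$ is exactly the length-$m$ factor of $\sigma(w)$ beginning at offset $t$, and is therefore completely determined by the pair $(w,t)$. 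The choice $m = \langle \sigma \rangle \cdot (n-1) + 1$ is calibrated precisely so that this inequality is tight, and verifying it is the only genuine obstacle; everything else is routine. To conclude, I would count: there are at most $p_X(n)$ possible factors $w \in \cal L(X)$ of length $n$, and for each of them at most $|\sigma(x_j)| \leq ||\sigma||$ admissible offsets $t$, while every length-$m$ factor of $Y$ arises from at least one such pair. This yields $p_Y(m) \leq ||\sigma|| \cdot p_X(n)$, which is the second inequality.
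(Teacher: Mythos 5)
Your proposal is correct and follows essentially the same route as the paper: both inequalities are obtained the same way, the first from monotonicity of $p_Y$ and the second by encoding each factor of length $\langle \sigma \rangle \cdot (n-1)+1$ of $Y$ by a length-$n$ word $u \in \cal L(X)$ together with an offset inside the first block $\sigma(x_1)$, giving at most $||\sigma|| \cdot p_X(n)$ possibilities. The only difference is that you spell out the length bookkeeping showing the factor fits entirely inside $\sigma(u)$, which the paper leaves implicit; this is a welcome addition rather than a deviation.
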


\begin{proof}
Any word $w$ in $\cal L(Y)$ of length $|w| = \langle \sigma \rangle \cdot (n-1) +1$ is a factor of $\sigma(u)$, for some 
word $u = x_1 \ldots x_n \in \cal L(X)$ of length $|u| = n$.
Furthermore we can assume that the first letter of $w$ occurs in $\sigma(x_1)$, or else we iteratively replace $u$ by $u' = x_2 \ldots x_n x_{n+1}$ any $x_{n+1} \in \cal A$. 

We also note that any factor $w$ of $\sigma(u)$ of a fixed given length is determined by the choice of $u$ and by the index of the first letter of $w$ as factor of $u$. By the assumption from the last paragraph we know that there are at most $|\sigma(x_1)| \leq ||\sigma||$ such possible indices. We derive directly that there are at most $||\sigma|| \cdot p_X(n)$ possible choices for the word $w$ with $|w| = \langle \sigma \rangle \cdot (n-1) +1$.

This proves the second of the two claimed inequalities; 
since 
$\langle \sigma \rangle \cdot (n-1) +1 = n + (\langle \sigma \rangle -1) \cdot (n-1)$ 
the first is just the well known monotony of the complexity function of any subshift.
\end{proof}

\subsection{Material from previous papers}
\label{sec:3.1}

${}^{}$

\smallskip

The notion of ``recognizability'' in symbolic dynamics has a long and fruitful history, and various versions of it have been in use over time. We refer here to the recently 
proposed 
specification
from \cite{BSTY19}, 
which has already been used in several subsequent papers and seems by now to be the established version in the field.

Before stating the formal definition, 
we have to be explicit about our convention how a non-erasing monoid morphism $\sigma: \cal A^* \to \cal B^*$ acts on an element ${\bf x} = \ldots x_{n-1} x_n x_{n+1} \ldots \in \cal A^\Z\,$: we first define two infinite half-words $y_1 y_2 \ldots := \sigma(x_1) \sigma(x_2) \ldots$ and $\ldots y_{-1} y_0 := \ldots \sigma(x_{-1}) \sigma(x_0)$ and then paste them together to obtain $\sigma({\bf x}) := \ldots y_{-1} y_0 y_1 \ldots$.

\begin{defn}
\label{3.0a}
Let $\sigma:\cal A^* \to \cal B^*$ be a non-erasing morphism, and let $X \subset \cal A^\Z$ be a subshift over $\cal A$. 
Then $\sigma$ is called {\em recognizable in $X$} 
if the following conclusion is true, for any 
biinfinite words ${\bf x}, {\bf x'} \in X \subset \cal A^\Z$ and ${\bf y} \in \cal B^\Z$, with ${\bf x} = \ldots x_{n-1} x_n x_{n+1} \ldots$ and ${\bf x'} = \ldots x'_{n-1} x'_n x'_{n+1} \ldots$.

Assume that ${\bf y} = T_\cal A^k(\sigma({\bf x}))$ and ${\bf y} = T_\cal A^\ell(\sigma({\bf x'}))$ for some integers $k, \ell$ which satisfy $0 \leq k \leq |\sigma(x_1)|-1$  and $ 0 \leq  \ell \leq |\sigma(x'_1)|-1$.
Then one has ${\bf x} = \bf x'$ and $k = \ell$.
\end{defn}

This combinatorial definition has been translated in Proposition 6.3 of \cite{BHL2.8-I} into a more conceptual property as follows:

\begin{prop}[\cite{BHL2.8-I}]
\label{3.0b}
Let $\sigma: \cal A^* \to \cal B^*$ be a non-erasing morphism of free monoids, and let $X \subset \cal A^\Z$ be any subshift over $\cal A$. Then $\sigma$ is recognizable in $X$ if and only if the morphism $\sigma$ is both, shift-orbit injective and shift-period preserving in $X$.
\qed
\end{prop}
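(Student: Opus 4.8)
The plan is to recast Definition \ref{3.0a} as the injectivity of the natural \emph{parametrization map}
$$\Phi\colon \{(\mathbf{x},k)\mid \mathbf{x}\in X,\ 0\le k\le|\sigma(x_1)|-1\}\ \longrightarrow\ \cal B^\Z,\qquad (\mathbf{x},k)\longmapsto T_\cal B^{\,k}\,\sigma(\mathbf{x}),$$
so that ``recognizable in $X$'' means exactly that $\Phi$ is injective. Throughout I would use the elementary \emph{shift-translation identity}
$$\sigma(T_\cal A^{\,j}\mathbf{x})\,=\,T_\cal B^{\,|\sigma(x_1\cdots x_j)|}\,\sigma(\mathbf{x})\qquad(j\ge 0),$$
together with its mirror for $j<0$ and its converse ``division with remainder along the cutting points'': every shift $T_\cal B^{\,m}\sigma(\mathbf{x})$ can be written uniquely as $T_\cal B^{\,k'}\sigma(T_\cal A^{\,j}\mathbf{x})$ with $0\le k'\le|\sigma(x_{j+1})|-1$. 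Here I read the notions of \cite{BHL2.8-I} as follows: \emph{shift-orbit injective in $X$} means that if $\sigma(\mathbf{x})$ and $\sigma(\mathbf{x}')$ lie in a common $T_\cal B$-orbit then $\mathbf{x},\mathbf{x}'$ lie in a common $T_\cal A$-orbit, while \emph{shift-period preserving in $X$} means that a $T_\cal A$-periodic $\mathbf{x}$ of minimal period $p$ has image of minimal $T_\cal B$-period exactly $L:=|\sigma(x_1\cdots x_p)|$ (and aperiodic points stay aperiodic).

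For the direction ``recognizable $\Rightarrow$ both'' I would argue directly from injectivity of $\Phi$. If $\sigma(\mathbf{x}')=T_\cal B^{\,m}\sigma(\mathbf{x})$, rewrite the right-hand side as $T_\cal B^{\,k'}\sigma(T_\cal A^{\,j}\mathbf{x})$ with $k'$ in canonical range; since also $\sigma(\mathbf{x}')=T_\cal B^{\,0}\sigma(\mathbf{x}')$, injectivity forces $(\mathbf{x}',0)=(T_\cal A^{\,j}\mathbf{x},k')$, hence $\mathbf{x}'=T_\cal A^{\,j}\mathbf{x}$, which is shift-orbit injectivity. For period preservation note $L$ is always a period of $\sigma(\mathbf{x})$; a period $L'$ with $0<L'<L$ would, after the same division-with-remainder, give $\sigma(\mathbf{x})=\Phi(\mathbf{x},0)=\Phi(T_\cal A^{\,j}\mathbf{x},k')$ with $(T_\cal A^{\,j}\mathbf{x},k')\neq(\mathbf{x},0)$, contradicting injectivity; the same computation also shows aperiodic points have aperiodic images.

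The substantial direction is ``both $\Rightarrow$ recognizable''. Suppose $\mathbf{y}=T_\cal B^{\,k}\sigma(\mathbf{x})=T_\cal B^{\,\ell}\sigma(\mathbf{x}')$ with both phases canonical. Then $\sigma(\mathbf{x}),\sigma(\mathbf{x}')$ share a $T_\cal B$-orbit, so shift-orbit injectivity yields $\mathbf{x}'=T_\cal A^{\,j}\mathbf{x}$; the translation identity then collapses the hypothesis to the single equation $T_\cal B^{\,k}\sigma(\mathbf{x})=T_\cal B^{\,\ell+s}\sigma(\mathbf{x})$ with $s=|\sigma(x_1\cdots x_j)|$, so that $k-\ell-s$ is a period of $\sigma(\mathbf{x})$. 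If $\sigma(\mathbf{x})$ is aperiodic this period is $0$, i.e.\ $k=\ell+s$, and a one-line comparison with the windows $0\le k\le|\sigma(x_1)|-1$ and $0\le\ell\le|\sigma(x'_1)|-1$ immediately rules out $j\neq 0$ (both $j>0$ and $j<0$ push $k$ outside its window), giving $j=0$ and $k=\ell$. If $\sigma(\mathbf{x})$ is periodic I normalise $j$ to its residue $r\in\{0,\dots,p-1\}$ modulo the minimal period $p$; period preservation tells me the minimal period of $\sigma(\mathbf{x})$ is $L$, so $k-\ell-s$ is a multiple $tL$, whence $k-\ell=tL+|\sigma(x_1\cdots x_r)|$ with $0\le|\sigma(x_1\cdots x_r)|<L$.

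The crux, and the step I expect to be the main obstacle, is the ensuing bookkeeping that pins down $t$ and $r$. Combining the two phase windows with $0\le|\sigma(x_1\cdots x_r)|<L$ one must show that $t\ge 1$ and $t\le -2$ force $k-\ell$ outside its admissible range, and that the delicate case $t=-1$ is impossible because it would require $|\sigma(x_{r+1}\cdots x_p)|\le|\sigma(x_{r+1})|-1$, absurd for a non-erasing $\sigma$. This leaves $t=0$, and then $k-\ell=|\sigma(x_1\cdots x_r)|\ge|\sigma(x_1)|$ whenever $r\ge 1$ again violates $k\le|\sigma(x_1)|-1$; hence $r=0$, giving $\mathbf{x}'=T_\cal A^{\,r}\mathbf{x}=\mathbf{x}$ and $k=\ell$. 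Getting all of these inequalities to close at once, while correctly tracking the first-letter length $|\sigma(x_{r+1})|$ of the shifted word $T_\cal A^{\,r}\mathbf{x}$, is where the care is needed; the aperiodic case is just the degenerate instance $L=\infty$ of this same count.
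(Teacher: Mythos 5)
The paper does not actually prove this proposition: it is quoted from \cite{BHL2.8-I} and the author explicitly declines to reproduce the (``elementary but tedious'') argument, so there is nothing to compare line by line. Your blind reconstruction is, as far as I can check, correct and complete in all essentials: the reformulation of Definition \ref{3.0a} as injectivity of the parametrization map $\Phi$, the shift-translation identity, and the division-with-remainder along cutting points are exactly the right tools, and the case analysis in the ``both $\Rightarrow$ recognizable'' direction ($t\ge 1$, $t\le -1$ via $k-\ell\le -|\sigma(x_{r+1}\cdots x_p)|\le -|\sigma(x_{r+1})|$, then $t=0$ forcing $r=0$) does close as you claim. One point deserves flagging: you build ``aperiodic points have aperiodic images'' into your reading of \emph{shift-period preserving}, whereas the paper's stated gloss of that notion only quantifies over periodic orbits. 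This extra clause is genuinely needed in your argument (an aperiodic $\mathbf{x}$ with periodic image always violates recognizability), but it is in fact a consequence of shift-orbit injectivity: the orbit of an aperiodic point in a subshift is never closed, so its closure contains some $\mathbf{z}$ off the orbit of $\mathbf{x}$, and by continuity of $\sigma$ the image $\sigma(\mathbf{z})$ lands in the (finite, closed) orbit of $\sigma(\mathbf{x})$, contradicting orbit-injectivity. Adding that one observation makes your proof valid verbatim against the definitions as stated in this paper, rather than against your slightly strengthened reading of them.
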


Here ``shift-orbit injective in $X$'' means that the map induced by $\sigma$ on the shift-orbits of $X$ is injective, and ``shift-period preserving in $X$'' means that for any periodic orbit $\ldots w w w \ldots$ in $X$ the image of $w$ satisfies $\sigma(w) = u^r$ for some $u \in \cal B^*$ and $r \geq 2$ if and only if $w = v^r$ for some $v \in \cal A^*$. The proof of Proposition \ref{3.0b} is elementary, but to fill in all details is rather tedious. The author recommends not to do it as ``exercise'', but rather look up the version presented in \cite{BHL2.8-I}.

In \cite{BHL2.8-I} the following terminology has been introduced, following an earlier version from \cite{Lu-habil}.

\begin{defn}
\label{3.1}
(1)
For any word $w \in \cal A^*$ and any integer $r \geq 0$ we define $w\chop_r$ (``$w$ chop $r$'') to be the factor of $w$ obtained through deleting the prefix and the suffix of length $r$ from $w$. If $r \geq \frac{|w|}{2}$, then $w\chop_r$ is the empty word.

\smallskip
\noindent
(2)
For any morphism $\sigma: \cal A^* \to \cal B^*$ and any subshift $X \subset \cal A^\Z$ we say that an integer $r \geq 0$ is a {\em repetition bound for $\sigma$ on $X$} if for any two words $w, w' \in \cal L(X)$ with $\sigma(w) = \sigma(w')$ 
one has 
$w\chop_r = w'\chop_r$.
\end{defn}

\begin{rem}
\label{canonical-decomposition}
Any non-erasing morphism $\sigma: \cal A^* \to \cal B^*$ admits canonically a decomposition $\sigma = \alpha_\sigma \circ \pi_\sigma$, where the {\em subdivision morphism} $\pi_\sigma: \cal A^* \to \cal A_\sigma^*$ is defined by first setting $\cal A_\sigma := \{a_i(k) \mid a_i \in \cal A\, , \,\, 1 \leq k \leq |\sigma(a_i)| \}$ and then declaring $\pi_\sigma(a_i) := a_i(1) a_i(2) \ldots a_i(|\sigma(a_i)|)$. 
The morphism $\alpha_\sigma: \cal A_\sigma^* \to \cal B^*$ is letter-to-letter in that it maps any $a_i(k)$ to the $k$-th letter $y_k \in \cal B$ of the image word $\sigma(a_i) = y_1 \ldots y_{|\sigma(a_i)|} \in \cal B^*$.
\end{rem}

In \cite{BHL2.8-I} 
the above quoted Proposition \ref{3.0b} has been used to derive (fairly directly) 
the following:

\begin{prop}[\cite{BHL2.8-I}]
\label{3.2}
For any subshift $X \subset \cal A^\Z$ a non-erasing morphism $\sigma: \cal A^* \to \cal B^*$ is recognizable in $X$ if and only if there exists a repetition bound 
$r \geq 0$ 
for the induced letter-to-letter morphism $\alpha_\sigma$ on the image subshift $\pi_\sigma(X)$. 
\qed
\end{prop}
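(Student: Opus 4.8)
The plan is to use the canonical factorization $\sigma = \alpha_\sigma \circ \pi_\sigma$ of Remark \ref{canonical-decomposition} to reduce the assertion to the purely letter-to-letter morphism $\alpha_\sigma$, and then to settle the letter-to-letter case by a compactness argument. The guiding principle is that the subdivision morphism $\pi_\sigma$ carries no ambiguity at all: the letters $a_i(k) \in \cal A_\sigma$ remember both the original letter $a_i$ and the position $k$ inside the block $\pi_\sigma(a_i) = a_i(1)\cdots a_i(|\sigma(a_i)|)$, so that the block boundaries of any element of $\pi_\sigma(X)$ are read off uniquely. I would make this precise by first showing that every biinfinite word $\mathbf z \in \pi_\sigma(X)$ can be written as $\mathbf z = T_{\cal A_\sigma}^{k}(\pi_\sigma(\mathbf x))$ for a \emph{unique} $\mathbf x \in X$ and a unique offset $0 \le k \le |\sigma(x_1)|-1$. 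This is a routine parsing argument: in any factor of $\pi_\sigma(\cal L(X))$ a letter $a_i(j)$ with $j < |\sigma(a_i)|$ is always followed by $a_i(j+1)$, and $a_i(|\sigma(a_i)|)$ by some $a_{i'}(1)$, so the decomposition of $\mathbf z$ into blocks is forced and the first coordinates assemble into a word $\mathbf x$ whose factors all lie in $\cal L(X)$.

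With this dictionary in hand, I would observe that since $\alpha_\sigma$ is letter-to-letter it commutes with the shift and sends $\mathbf z = T_{\cal A_\sigma}^{k}(\pi_\sigma(\mathbf x))$ to $T^{k}\sigma(\mathbf x)$; moreover, its image blocks all have length $1$, so the offsets in Definition \ref{3.0a} are forced to vanish and recognizability of $\alpha_\sigma$ in $\pi_\sigma(X)$ is simply the injectivity of $\alpha_\sigma$ as a map on the biinfinite words of $\pi_\sigma(X)$. Translating Definition \ref{3.0a} for $\sigma$ on $X$ across the dictionary then yields the equivalence: $\sigma$ is recognizable in $X$ if and only if $\alpha_\sigma$ is injective on $\pi_\sigma(X)$. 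Indeed, if $\alpha_\sigma$ is injective and $T^{k}\sigma(\mathbf x) = T^{\ell}\sigma(\mathbf x')$, then the two words $T_{\cal A_\sigma}^{k}(\pi_\sigma(\mathbf x))$ and $T_{\cal A_\sigma}^{\ell}(\pi_\sigma(\mathbf x'))$ have equal $\alpha_\sigma$-image, hence coincide, and the uniqueness of the block-parsing forces $\mathbf x = \mathbf x'$ and $k = \ell$; the converse implication runs the same way backwards.

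It then remains to prove, for the letter-to-letter morphism $\alpha := \alpha_\sigma$ and the subshift $Z := \pi_\sigma(X)$, that $\alpha$ is injective on $Z$ if and only if it admits a repetition bound $r \ge 0$ in the sense of Definition \ref{3.1}(2). The direction ``repetition bound $\Longrightarrow$ injective'' is elementary: if $\alpha(\mathbf z) = \alpha(\mathbf z')$ for $\mathbf z, \mathbf z' \in Z$, then for any index $i$ and any $m > r$ the two factors of length $2m+1$ centered at $i$ have equal $\alpha$-image, so the repetition bound forces their chopped middles, hence the letters $z_i$ and $z'_i$, to agree; as $i$ is arbitrary, $\mathbf z = \mathbf z'$. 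For the converse I would argue by contraposition and compactness: if no repetition bound exists, then for every $r$ there are $w_r, w_r' \in \cal L(Z)$ with $\alpha(w_r) = \alpha(w_r')$ (so $|w_r| = |w_r'|$) but $w_r\chop_r \neq w_r'\chop_r$, i.e.\ they differ at some position $p_r$ at distance greater than $r$ from both ends. Recentering $p_r$ at the origin, extending $w_r, w_r'$ to biinfinite words in the subshift $Z$, and passing to a subsequence along which both converge in $\cal A_\sigma^{\Z}$ and the differing origin letters stabilize to a fixed pair $a \neq b$ (possible since $\cal A_\sigma$ is finite), I obtain limits $\mathbf z, \mathbf z' \in Z$ with $\mathbf z \neq \mathbf z'$ (they differ at the origin) and $\alpha(\mathbf z) = \alpha(\mathbf z')$ (the equality at position $p_r + i$ holds for all large $r$ and passes to the limit at each fixed $i$), contradicting injectivity.

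The main obstacle I anticipate is the bookkeeping in the first reduction: matching the shift-offsets $k, \ell$ of Definition \ref{3.0a} for $\sigma$ with the positions inside $\pi_\sigma$-blocks, and verifying that the block-parsing of elements of $\pi_\sigma(X)$ is genuinely forced and produces a point of $X$. Once that dictionary is secured, the letter-to-letter equivalence is a standard compactness argument whose only delicate point is ensuring that the limiting biinfinite words remain in $Z$ and retain a common $\alpha$-image while still differing at the origin.
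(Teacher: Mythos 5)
Your argument is correct, but note that the paper itself gives no proof of Proposition \ref{3.2}: it is quoted from \cite{BHL2.8-I}, and the surrounding text indicates that there it is derived from Proposition \ref{3.0b}, i.e.\ from the characterization of recognizability as ``shift-orbit injective and shift-period preserving in $X$''. Your route is genuinely different and self-contained: you never invoke Proposition \ref{3.0b}, but work directly from Definition \ref{3.0a}. Both pillars of your proof hold up. First, the unique block-parsing of elements of $Z=\pi_\sigma(X)$ is indeed forced (a letter $a_i(j)$ with $j<|\sigma(a_i)|$ can only be followed by $a_i(j+1)$, and any occurrence of a block-delimited factor $\pi_\sigma(x_\ell\ldots x_m)$ inside some $\pi_\sigma(w)$ is necessarily block-aligned, so the extracted letter sequence has all its factors in $\cal L(X)$ and hence lies in $X=X(\cal L(X))$); this converts Definition \ref{3.0a} for $\sigma$ on $X$ --- including the uniqueness of the offsets $k,\ell$, which is exactly what encodes period preservation --- into plain injectivity of $\alpha_\sigma$ on the biinfinite words of $Z$. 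Second, the equivalence of that injectivity with the existence of a repetition bound for a letter-to-letter morphism is the standard contraposition-plus-compactness argument you describe, and the delicate points (that the recentered extensions and their limits lie in the closed, shift-invariant set $Z$; that the coordinatewise image equality passes to the limit because each fixed coordinate eventually falls inside both words $w_r, w_r'$; and that the disagreement at the origin survives along a subsequence since the alphabet is finite) are all addressed. What your approach buys is a first-principles proof that bypasses the intermediate conceptual characterization; what the cited route buys is that Proposition \ref{3.0b} is independently useful elsewhere and makes the derivation of Proposition \ref{3.2} ``fairly direct'' once it is available.
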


\medskip
\subsection{The lower bound}
\label{sec:3.2}

${}^{}$

\smallskip

For an arbitrary infinite set $\cal L \subset \cal A^*$ one has in general
$$\cal L \nsubset \cal L(X(\cal L)) \quad \text{and} \quad \cal L(X(\cal L)) \nsubset \cal L\, .$$
For a non-erasing morphism $\sigma: \cal A^* \to \cal B^*$ and an arbitrary subshift $X \subset \cal A^\Z$ one always has $\sigma(\cal L(X)) \subset \cal L(\sigma(X))$, but this inclusion is in general not an equality. However, since $\alpha_\sigma$ is letter-to-letter, in this special case one has indeed
\begin{equation}
\label{eq3.0}
\alpha_\sigma(\cal L(Z)) \,\, = \,\, \cal L(Y)\, ,
\end{equation}
for $Z:= \pi_\sigma(X)$ and $Y := \sigma(X)$ (which satisfy by definition $\alpha_\sigma(Z) = Y$). Hence we derive from Proposition \ref{3.2}:

\begin{lem}
\label{3.3}
If $\sigma$ is recognizable in $X$, then the complexity functions for $Z = \pi_\sigma(X)$ and $Y = \sigma(X)$ satisfy
\begin{equation}
\label{eq3.1}
p_Y(n + 2r) \geq p_Z(n)
\end{equation}
for any integer $n \geq 0$ and the repetition bound $r \geq 0$ from Proposition \ref{3.2}.
\end{lem}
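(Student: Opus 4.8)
The plan is to exploit that $\alpha_\sigma$ is letter-to-letter, hence length-preserving, and to read the repetition bound $r$ as a bound on how much information $\alpha_\sigma$ can lose. I would introduce three finite sets of words: the set $S$ of length-$(n+2r)$ factors in $\cal L(Z)$, the set $T$ of length-$(n+2r)$ factors in $\cal L(Y)$, and the set $U$ of length-$n$ factors in $\cal L(Z)$. Then $|T| = p_Y(n+2r)$ and $|U| = p_Z(n)$, so the desired inequality $p_Y(n+2r) \geq p_Z(n)$ is exactly $|T| \geq |U|$, which I would establish by producing a surjection $\psi \colon T \to U$.

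First I would record two surjections out of $S$. Since $\alpha_\sigma$ is letter-to-letter it preserves length, and by the identity \eqref{eq3.0} it carries $\cal L(Z)$ onto $\cal L(Y)$; restricting to length $n+2r$ therefore yields a surjection $\alpha_\sigma \colon S \twoheadrightarrow T$. Second, the chop map $\chi \colon S \to U$, $w \mapsto w\chop_r$ of Definition \ref{3.1}, sends a length-$(n+2r)$ word to its length-$n$ central factor, and it is surjective: because $Z = \pi_\sigma(X)$ is a subshift, its language is factorial and bi-extendable, so every $v \in U$ can be extended by $r$ letters on each side to a factor $w \in S$ with $w\chop_r = v$.

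The key step is to factor $\chi$ through $\alpha_\sigma$. By Proposition \ref{3.2} the integer $r$ is a repetition bound for $\alpha_\sigma$ on $Z$, which by Definition \ref{3.1} means precisely that $\alpha_\sigma(w) = \alpha_\sigma(w')$ implies $w\chop_r = w'\chop_r$ for all $w, w' \in \cal L(Z)$. Hence each fiber of $\alpha_\sigma \colon S \to T$ is contained in a single fiber of $\chi \colon S \to U$, so the assignment $\psi(\alpha_\sigma(w)) := w\chop_r$ is unambiguous and defines a map $\psi \colon T \to U$ with $\psi \circ \alpha_\sigma = \chi$. Since $\chi$ is surjective and $\chi = \psi \circ \alpha_\sigma$, the map $\psi$ is surjective as well, and counting gives $p_Y(n+2r) = |T| \geq |U| = p_Z(n)$, as claimed.

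I expect the only genuine subtlety to be the surjectivity of the chop map $\chi$, namely the claim that every length-$n$ factor of $Z$ occurs as the central block of some length-$(n+2r)$ factor. This is exactly the bi-extendability encoded in the correspondence $\Sigma(\cal A) \leftrightarrow \Lambda(\cal A)$ recalled in the introduction, applied to the subshift $Z$; iterating one-sided extensions yields the required word $w$. Everything else is a formal counting argument, once the repetition bound is recognized as the assertion that the image $\alpha_\sigma(w)$ already determines the chopped word $w\chop_r$.
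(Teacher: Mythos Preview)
Your proof is correct and follows essentially the same approach as the paper's own proof: both use that $\alpha_\sigma$ maps $\cal L(Z)$ onto $\cal L(Y)$ by \eqref{eq3.0}, invoke the repetition bound to see that the chopped word $u\chop_r$ depends only on $\alpha_\sigma(u)$, and conclude the inequality by counting. Your formulation via the factorization $\chi = \psi \circ \alpha_\sigma$ is in fact slightly more explicit than the paper's, which leaves the surjectivity of the chop map (i.e.\ the bi-extendability of $\cal L(Z)$) implicit; you were right to single this out as the one point that needs a word of justification.
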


\begin{proof}
If $w$ and $w'$ are words in $\cal L(Y)$ of length $n + 2r$, then by (\ref{eq3.0}) there are words $u, u' \in \cal L(Z))$ with $\alpha_\sigma(u) = w$ and $\alpha_\sigma(u') = w'$. Since $\alpha_\sigma$ is letter-to-letter, both $u$ and $u'$ have also length $n+2r$. From Definition \ref{3.1} (2) and Proposition \ref{3.2} we know that 
$u\chop_r \neq u'\chop_r\, $ implies $w \neq w'\, $, 
with $|u\chop_r| = | u'\chop_r| = n$.
Hence the number of words of length $n+2r$ in $\cal L(Y)$ must be bigger or equal to the number of words in $\cal L(Z)$ that have length $n$.
\end{proof}

Recall now that for a finite alphabet $\cal C$ any word $w \in \cal C^*$ of length $n \geq 0$ can be prolonged in at most $\card(\cal C)^{m -n}$ different ways to give a word of length $m \geq n$ with $w$ as prefix or with $w$ as suffix. We deduce:
\begin{equation}
\label{eq3.3}
p_Z(n+2r) \,\, \leq \,\, \card(\cal A_\sigma)^{2r} \cdot p_Z(n) \quad \text{for any integer} \quad n \geq 0
\end{equation}

\begin{lem}
\label{3.4}
For any non-erasing morphism $\sigma: \cal A^* \to \cal B^*$ and any subshift $X \subset \cal A^\Z$ with subdivision image $Z = \pi_\sigma(X)$ the complexity functions satisfy:
\begin{equation}
\label{eq3.4}
p_Z(||\sigma|| \cdot n) \,\, \geq \,\, p_X(n)
\end{equation}
\end{lem}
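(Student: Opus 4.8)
The plan is to exhibit an explicit injection from the set of length-$n$ words of $\cal L(X)$ into the set of length-$(||\sigma||\cdot n)$ words of $\cal L(Z)$, which immediately yields the asserted inequality $p_X(n) \leq p_Z(||\sigma||\cdot n)$. Given a word $u = x_1 \ldots x_n \in \cal L(X)$, I would first prolong it to a longer word $u' = x_1 \ldots x_N \in \cal L(X)$ with $|\pi_\sigma(u')| \geq ||\sigma|| \cdot n$ (possible since $\sigma$ is non-erasing, so each extra letter adds at least one letter to the image, and since $\cal L(X)$ is bi-extendable). I then define $w(u)$ to be the prefix of length exactly $||\sigma|| \cdot n$ of the image word $\pi_\sigma(u') = \pi_\sigma(x_1)\pi_\sigma(x_2)\cdots$. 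Because $\pi_\sigma(\cal L(X)) \subseteq \cal L(\pi_\sigma(X)) = \cal L(Z)$ and languages of subshifts are factorial, the prefix $w(u)$ lies in $\cal L(Z)$; thus $u \mapsto w(u)$ is well defined and takes values in the desired set.

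The key point is that $w(u)$ contains the entire block decomposition of $\pi_\sigma(u)$: by the definition of $||\sigma||$ one has $|\pi_\sigma(u)| = \sum_{i=1}^{n} |\sigma(x_i)| \leq ||\sigma|| \cdot n$, so the first $n$ blocks $\pi_\sigma(x_1), \ldots, \pi_\sigma(x_n)$ all lie within the length-$(||\sigma||\cdot n)$ prefix $w(u)$. It then remains to recover $u$ from $w(u)$, and here I would exploit the self-delimiting structure of the subdivision alphabet $\cal A_\sigma$ built into Remark \ref{canonical-decomposition}: the very first letter of $w(u)$ is $x_1(1)$, which identifies both the original letter $x_1 \in \cal A$ and the block length $|\sigma(x_1)|$; reading off that block and repeating, one successively and unambiguously recovers $x_2, x_3, \ldots, x_n$. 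Hence distinct words $u$ produce distinct words $w(u)$, and the injection is established.

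The step I expect to be the crux — and where a naive counting argument could go astray — is precisely this unambiguous decoding. The essential feature is that each letter $a_i(1)$ of $\cal A_\sigma$ marks the unique starting position of a $\pi_\sigma$-block and encodes its length, so that the block boundaries within $w(u)$ are intrinsically determined and no two source words can collide. I note that, in contrast to Lemma \ref{3.3}, no recognizability hypothesis on $\sigma$ is needed here: the statement concerns only the purely combinatorial subdivision morphism $\pi_\sigma$, which is always uniquely decodable, regardless of the behaviour of the full morphism $\sigma$.
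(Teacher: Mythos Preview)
Your proof is correct and follows essentially the same approach as the paper: construct an injection from length-$n$ words of $\cal L(X)$ into length-$(||\sigma||\cdot n)$ words of $\cal L(Z)$ by applying $\pi_\sigma$ and prolonging to the target length. The only cosmetic difference is that the paper prolongs $\pi_\sigma(w)$ directly inside $\cal L(Z)$ rather than first prolonging $w$ in $\cal L(X)$, and it justifies injectivity in one line (``distinct $w$ give distinct prefixes $\pi_\sigma(w)$'') where you spell out the block-decoding mechanism of the subdivision alphabet --- your version is more explicit but logically equivalent.
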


\begin{proof}
Recall that any word in a subshift language can be prolonged arbitrarily (in either direction) to give another word that any also belongs to the same subshift language. It follows that for any integer $n \geq 0$ and any word $w \in \cal L(X) \subset \cal A^*$ of length $|w| = n$ the image word $\pi_\sigma(w)$ can be prolonged (on the right hand side) to give a word $u(w) \in \cal L(Z) \subset\cal A_\sigma^*$ of length $||\sigma|| \cdot n\, $. 
Furthermore, the words $u(w)$ and $u(w')$ are distinct for any words $w \neq w'$ in $\cal L(X)$ of length $n$, as they have different prefixes $\pi_\sigma(w)$ and $\pi_\sigma(w')$ respectively. This proves the claimed inequality.
\end{proof}

We can now prove the statement which is the main goal of this section:

\begin{prop}
\label{3.5}
Let $\cal A\, , \,\, \cal B$ be non-empty finite sets, and let $\sigma: \cal A^* \to \cal B^*$ be a non-erasing monoid morphism. Let $X \subset \cal A^\Z$ be any subshift over $\cal A$, and denote by $Y: = \sigma(X)$ the image subshift.

If $\sigma$ is recognizable in $X$, then there are constants $c > 0$ and $d > 0$ such that the complexity functions $p_X$ and $p_Y$ satisfy 
$$p_Y(m) \,\, \geq \,\, c \cdot p_X(d\cdot m)$$
for any sufficiently large integer $m \geq 0$.
\end{prop}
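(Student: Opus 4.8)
The plan is to chain together the three lemmas already established in this subsection. Proposition \ref{3.5} asks for a lower bound $p_Y(m) \geq c \cdot p_X(d \cdot m)$, and the natural route is to route through the subdivision image $Z = \pi_\sigma(X)$, since recognizability of $\sigma$ in $X$ was translated (via Proposition \ref{3.2}) into the existence of a repetition bound $r$ for the letter-to-letter morphism $\alpha_\sigma$ on $Z$. Lemma \ref{3.3} gives $p_Y(n + 2r) \geq p_Z(n)$, and Lemma \ref{3.4} gives $p_Z(\|\sigma\| \cdot k) \geq p_X(k)$. The first relates $Y$ to $Z$, the second relates $Z$ to $X$; the task is to compose them and collapse the shift by $2r$ and the dilation by $\|\sigma\|$ into the two advertised constants $c$ and $d$.

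Concretely, I would start from Lemma \ref{3.4} applied at argument $k$, giving $p_Z(\|\sigma\| \cdot k) \geq p_X(k)$, and set $n = \|\sigma\| \cdot k$ so that Lemma \ref{3.3} yields $p_Y(\|\sigma\| \cdot k + 2r) \geq p_Z(\|\sigma\| \cdot k) \geq p_X(k)$. Writing $m = \|\sigma\| \cdot k + 2r$, this already reads $p_Y(m) \geq p_X(k)$ with $k = (m - 2r)/\|\sigma\|$, so up to handling integrality and the additive shift $2r$ one would be done with $c = 1$. The remaining work is purely bookkeeping: to turn the affine relation $k = (m - 2r)/\|\sigma\|$ into a clean $k \geq d \cdot m$ valid for all large $m$, one chooses $d$ slightly below $1/\|\sigma\|$ (any $d < 1/\|\sigma\|$ works, since $(m - 2r)/\|\sigma\| \geq d \cdot m$ once $m$ is large enough), and then invokes monotonicity of $p_X$ to replace $p_X(k)$ by $p_X(\lceil d \cdot m \rceil)$ or simply $p_X(d \cdot m)$ after the appropriate rounding. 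Here is where the auxiliary inequality (\ref{eq3.3}), namely $p_Z(n + 2r) \leq \card(\cal A_\sigma)^{2r} \cdot p_Z(n)$, becomes useful: it lets one absorb the additive shift $2r$ into the multiplicative constant $c$ rather than into the argument, which is cleaner if one prefers to keep $d$ as close to $1/\|\sigma\|$ as possible and pay for the shift through $c = \card(\cal A_\sigma)^{-2r}$.

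I would phrase the final assembly so that the shift $2r$ is handled by (\ref{eq3.3}) and the dilation by Lemma \ref{3.4}. Applying (\ref{eq3.3}) gives $p_Z(n) \geq \card(\cal A_\sigma)^{-2r} \cdot p_Z(n + 2r)$; combined with Lemma \ref{3.3} this reads $p_Z(n) \leq p_Y(n + 2r)$ only in one direction, so more carefully I would run the estimate forward: from Lemma \ref{3.3} and Lemma \ref{3.4} one gets, for $m = \|\sigma\| \cdot k + 2r$,
\begin{equation*}
p_Y(m) \,\, \geq \,\, p_Z(\|\sigma\| \cdot k) \,\, \geq \,\, p_X(k) \,\, \geq \,\, p_X(\lceil d \cdot m \rceil),
\end{equation*}
where the last step uses monotonicity and $k \geq d \cdot m$ for large $m$. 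For arguments $m$ not of the exact form $\|\sigma\| \cdot k + 2r$, I would round down to the nearest such value and again invoke monotonicity, paying a bounded multiplicative factor that is absorbed into $c$. The honest bookkeeping cost is exactly one application of monotonicity plus one rounding, so I would set $c = 1$ (or, after the rounding adjustment, some fixed positive constant) and any $d \in (0, 1/\|\sigma\|)$.

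The main obstacle is not conceptual but lies entirely in the constant-chasing: one must verify that the two reductions (the additive $2r$ from recognizability and the multiplicative $\|\sigma\|$ from subdivision) do not interact badly, and in particular that choosing $d$ strictly less than $1/\|\sigma\|$ leaves enough room to swallow the additive shift $2r$ for all sufficiently large $m$. There is a subtle point to check, namely that $\|\sigma\| \geq 1$ (guaranteed since $\sigma$ is non-erasing, so $\langle \sigma \rangle \geq 1$ and hence $\|\sigma\| \geq 1$) and that $p_X$ is genuinely monotone — both of which are standard. The only place where one could slip is in matching the integrality of arguments across the two lemmas, so I would be careful to state the rounding explicitly and to confirm that the resulting inequality holds for all sufficiently large $m$ rather than for all $m$.
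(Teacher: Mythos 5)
Your proposal is correct and follows essentially the same route as the paper: decompose $\sigma = \alpha_\sigma \circ \pi_\sigma$, chain Lemma \ref{3.4} and Lemma \ref{3.3} through $Z = \pi_\sigma(X)$, choose $d$ strictly below $1/||\sigma||$ so that monotonicity absorbs the integrality rounding, and handle the additive shift $2r$ either in the argument or (as the paper does) via inequality (\ref{eq3.3}) with $c = \card(\cal A_\sigma)^{-2r}$. No gaps.
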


\begin{proof}
As explained in
Remark \ref{canonical-decomposition} we decompose $\sigma$ canonically into $\sigma = \alpha_\sigma \circ \pi_\sigma$ and we consider the intermediate image subshift $Z := \pi_\sigma(X)$.

We now pick the constant $d$ to satisfy  $0 < d < \frac{1}{||\sigma||}$ 
and observe that for $m \geq \frac{1}{\frac{1}{||\sigma||} - d}$ we have $d\cdot m \leq \frac{1}{||\sigma||} \cdot (m - ||\sigma||)$, so that 
the monotony of the complexity function $p_X$ implies $p_X(d\cdot m) \leq p_X(\frac{1}{||\sigma||} \cdot (m -k))$ for any integer $k \in [1, ||\sigma||]$. The right choice of $k$ assures that $m' := \frac{1}{||\sigma||} \cdot (m - k))$ is an integer, so that the monotony of $p_Z$ and inequality (\ref{eq3.4}) implies $p_Z(m) \geq p_Z(m-k) = p_Z(||\sigma|| \cdot m') \geq p_X(m')\geq p_X(d\cdot m)$ for all sufficiently large integers $m$.

We now set $r \geq 0$ to be the repetition bound on $Z$ for the morphism $\alpha_\sigma$ obtained from Proposition \ref{3.2}, and set the constant $c > 0$ to be equal to $c = \frac{1}{\tiny\card(\cal A_\sigma)^{2r}}$, in order to obtain from (\ref{eq3.3}) and from the last paragraph the inequalities $p_Z(m - 2r) \geq c \cdot p_Z(m) \geq c \cdot p_X(d\cdot m)$ for all sufficiently large integers $m$. We then apply Lemma \ref{3.3} to derive $p_Y(m) \geq p_Z(m - 2r) \geq  c \cdot p_X(d\cdot m)$
for any sufficiently large integer $m \geq 0$.
\end{proof}

\section{The counter-example}
\label{sec:4}

We first observe from the previous section that in the special case where the given morphism $\sigma: \cal A^* \to \cal B^*$ is letter-to-letter, the associated subdivision morphism $\pi_\sigma$ is a monoid-isomorphism, so that we can identify $\sigma$ with the canonically associated letter-to-letter morphism $\alpha_\sigma$. We can hence use the same argument as in the last paragraph of the proof of Proposition \ref{3.5} (with $X$ replacing $Z$ and $\sigma$ replacing $\alpha_\sigma$) to deduce:

\begin{prop}
\label{4.1}
Let $\cal A\, , \,\, \cal B$ be non-empty finite sets, and let $\sigma: \cal A^* \to \cal B^*$ be a monoid morphism which is letter-to-letter. Let $X \subset \cal A^\Z$ be any subshift over $\cal A$, and denote by $Y: = \sigma(X)$ the image subshift.

If $\sigma$ is recognizable in $X\,$,
then for the  constant $c = \frac{1}{\tiny \card(\cal A)^{2r}}$ the complexity functions $p_X$ and $p_Y$ satisfy 
\begin{equation}
\label{eq4.1}
p_Y(m) \,\, \geq \,\, c \cdot p_X(m)
\end{equation}
for any sufficiently large integer $m \geq 0$.
Here $r \geq 0$ is the repetition bound for $\sigma$ in $X$ which is given by Proposition \ref{3.2}.
\qed
\end{prop}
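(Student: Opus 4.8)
The plan is to reduce this statement to the lower-bound machinery already assembled in Subsection~\ref{sec:3.2}, exploiting the fact that for a letter-to-letter morphism the canonical decomposition $\sigma = \alpha_\sigma \circ \pi_\sigma$ from Remark~\ref{canonical-decomposition} degenerates. First I would verify that when $|\sigma(a_i)| = 1$ for every letter $a_i \in \cal A$, the subdivision alphabet $\cal A_\sigma = \{a_i(1) \mid a_i \in \cal A\}$ is in canonical bijection with $\cal A$, so that $\pi_\sigma$ is a monoid isomorphism and, after this identification, $\alpha_\sigma$ becomes the morphism $\sigma$ itself. Consequently the intermediate subshift $Z = \pi_\sigma(X)$ is identified with $X$, and the repetition bound $r \geq 0$ for $\alpha_\sigma$ on $Z$ supplied by Proposition~\ref{3.2} is precisely a repetition bound for $\sigma$ on $X$, with $\card(\cal A_\sigma) = \card(\cal A)$.

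With this identification in hand, the remainder is a short chaining of two inequalities already proved for $Z$. From Lemma~\ref{3.3}, applied with $Z = X$ and $n = m - 2r$, I obtain $p_Y(m) \geq p_X(m - 2r)$. Separately, inequality~(\ref{eq3.3}), again with $Z = X$ and $\cal A_\sigma$ identified with $\cal A$, yields $p_X(m) \leq \card(\cal A)^{2r} \cdot p_X(m - 2r)$, which is exactly the assertion $p_X(m - 2r) \geq c \cdot p_X(m)$ for the constant $c = \frac{1}{\card(\cal A)^{2r}}$. Concatenating the two gives $p_Y(m) \geq p_X(m - 2r) \geq c \cdot p_X(m)$, as claimed; the hypothesis ``for sufficiently large $m$'' is used only to guarantee $m \geq 2r$, so that the index $m - 2r$ is a nonnegative integer to which the complexity function applies.

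The main obstacle, such as it is, is conceptual rather than computational: one must check carefully that the degeneration of $\pi_\sigma$ into an isomorphism genuinely lets $\sigma$ step into the role played by $\alpha_\sigma$ in Subsection~\ref{sec:3.2}, since all the intermediate lemmas there are phrased for the letter-to-letter morphism $\alpha_\sigma$ acting on $Z = \pi_\sigma(X)$. Once this bookkeeping is done, no step of the chain degrades the estimate --- in contrast with the proof of Proposition~\ref{3.5}, where passing through the coarser length scale of $\pi_\sigma$ forced the extra factor $d$ inside the argument of $p_X$ via Lemma~\ref{3.4}. Here that factor is absent precisely because $\pi_\sigma$ preserves word lengths, which is the structural reason the stronger inequality~(\ref{eq4.1}) survives in the letter-to-letter case.
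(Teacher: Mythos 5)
Your proposal is correct and follows essentially the same route as the paper: the paper likewise observes that for a letter-to-letter $\sigma$ the subdivision morphism $\pi_\sigma$ is an isomorphism, identifies $\sigma$ with $\alpha_\sigma$ and $X$ with $Z$, and then reruns the last paragraph of the proof of Proposition \ref{3.5} (i.e.\ Lemma \ref{3.3} combined with inequality (\ref{eq3.3})) to chain $p_Y(m) \geq p_X(m-2r) \geq c \cdot p_X(m)$. Your added remark that the factor $d$ disappears precisely because $\pi_\sigma$ preserves lengths is accurate and matches the paper's intent.
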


It follows from this proposition, together with the inequalities from Lemma \ref{X.1}, 
that the complexity functions of $X$ and $\sigma(X)$, in case of a letter-to-letter morphism $\sigma$ which is recognizable in $X$, 
must belong to the same $\Theta$-equivalence class. This proves statement (2) of Theorem \ref{1.1}. 
However, the original goal of the author, namely to show that the same statement is true without the assumption ``letter-to-letter'', turns 
out to be impossible to achieve; we will now present a counter-example.

\medskip

Let $\cal A$ be any finite alphabet with at least two letters. Let $\cal A_{\rm II}$ be the ``double'' of $\cal A$ which consists of letters $a_i^-$ and $a_i^+$ for any $a_i \in \cal A$. Let $\sigma_{\rm II}: \cal A^* \to \cal A_{\rm II}^*$ be the {\em subdivision morphism} defined by
$$\sigma_{\rm II}(a_i) = a_i^- a_i^+$$
for all $a_i \in \cal A$. We observe:

\begin{lem}
\label{4.2}
(1) The morphism $\sigma_{\rm II}$ is recognizable in the full shift $\cal A^\Z$ (and hence in any subshift $X \subset \cal A^\Z$).

\smallskip
\noindent
(2)
For any subshift $X \subset \cal A^\Z$ with image subshift $Y := \sigma_{\rm II}(X)$ the complexity functions satisfy
$$p_Y(2n-1) \,\,=\,\, 2 p_X(n)  \quad \text{and}\quad p_Y(2n) \,\,=\,\, p_X(n) + p_X(n+1)$$
for any integer $n \geq 1$.
\end{lem}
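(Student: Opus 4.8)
**The plan is to prove both parts separately, treating recognizability first and then establishing the exact complexity formulas by a direct counting argument.**

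For part (1), the cleanest route is to verify recognizability directly from Definition~\ref{3.0a}. The key structural observation is that $\sigma_{\rm II}$ introduces a \emph{visible parity marker}: every letter of the form $a_i^-$ opens a block and every $a_i^+$ closes it, and these two families are disjoint in $\cal A_{\rm II}$. So in any biinfinite word $\sigma_{\rm II}(\mathbf x) = \ldots y_{-1} y_0 y_1 \ldots$, the positions occupied by ``$-$''-letters are exactly the positions where a new $\sigma_{\rm II}(x_n)$ begins. First I would show that the cutting points (the boundaries between consecutive blocks $\sigma_{\rm II}(x_n)$) are intrinsically determined by the image: a cut occurs precisely between a ``$+$''-letter and the following ``$-$''-letter. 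Since each image block $\sigma_{\rm II}(a_i) = a_i^- a_i^+$ has length exactly $2$, the admissible shift $k$ in Definition~\ref{3.0a} satisfies $0 \le k \le 1$, and the parity of the letter at position $0$ forces a unique value of $k$. Once the cutting points and the value of $k$ are pinned down, the preimage letters $x_n$ are recovered unambiguously from each pair $a_i^- a_i^+$ (as $a_i^-$ and $a_i^+$ both determine $a_i$). This yields $\mathbf x = \mathbf{x'}$ and $k = \ell$, which is exactly recognizability in the full shift $\cal A^\Z$; recognizability in any subshift $X \subset \cal A^\Z$ then follows immediately, since the defining condition only becomes weaker when restricted to a subset.

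For part (2), I would argue by an explicit bijective count of the factors of $Y = \sigma_{\rm II}(X)$ of each length, organized by the parity of the starting position relative to the block structure. Because every block has length $2$, a factor $w \in \cal L(Y)$ of even length $2n$ either starts at a block boundary (then $w = \sigma_{\rm II}(u)$ for a unique $u \in \cal L(X)$ with $|u| = n$) or starts one position into a block (then $w = a_{i_0}^+\, \sigma_{\rm II}(x_2 \ldots x_n)\, a_{i_{n+1}}^-$, i.e. it is the ``shifted'' window over a word $u' = x_1 \ldots x_{n+1} \in \cal L(X)$ of length $n+1$ with the first ``$-$'' and the last ``$+$'' deleted). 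These two families are disjoint (distinguished by whether $w$ begins with a ``$-$''-letter or a ``$+$''-letter), and each is in bijection with the relevant set of $X$-factors, giving $p_Y(2n) = p_X(n) + p_X(n+1)$. The odd case $p_Y(2n-1) = 2\,p_X(n)$ is handled the same way: a factor of odd length $2n-1$ begins either with a ``$-$''-letter or a ``$+$''-letter, and in both cases the block-alignment determines a unique length-$n$ $X$-factor it came from (the two parity classes each contribute $p_X(n)$). I would set up these correspondences carefully, checking injectivity (distinct $X$-factors give distinct $Y$-factors, since $\sigma_{\rm II}$ is injective on letters) and surjectivity (every $Y$-factor arises this way by bi-extendability of the language $\cal L(X)$, which guarantees any factor of $X$ extends to a longer one).

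\emph{The main obstacle} I anticipate is bookkeeping the boundary effects in the counting for part (2), rather than any conceptual difficulty. One must be scrupulous about which $X$-factor a given $Y$-factor lifts to when the window straddles block boundaries, and in particular must invoke bi-extendability of $\cal L(X)$ to ensure that the ``shifted'' factors of odd or even length genuinely correspond to $X$-factors of the stated length $n$ or $n+1$ (and that no factors are missed or double-counted at the two ends of the window). The parity-marker structure makes the disjointness of the cases transparent, so the counting is ultimately routine, but getting the index ranges and the exact lengths $n$ versus $n+1$ correct is where the care is needed.
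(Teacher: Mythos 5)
Your proposal is correct and follows essentially the same route as the paper: part (1) is the ``desubstitution'' argument (the $\pm$-parity marker pins down the cutting points and the shift $k\in\{0,1\}$), which is one of the two justifications the paper sketches, and part (2) is exactly the paper's case split -- odd-length factors are prefixes or suffixes of $\sigma_{\rm II}(u)$ with $|u|=n$, even-length factors are either $\sigma_{\rm II}(u)$ with $|u|=n$ or $\sigma_{\rm II}(u)\chop_1$ with $|u|=n+1$. Your version merely makes the injectivity/disjointness bookkeeping more explicit than the paper does.
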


\begin{proof}
(1)
The morphism $\sigma_{\rm II}$ is a subdivision morphism, and it is well known that any subdivision morphism is recognizable in the full shift. This can be either seen directly from Definition \ref{3.0a} via ``desubstitution'' of $\sigma_{\rm II}(\cal A^\Z)$, or else from the fact that any subdivision morphism is induced by a subdivision of the graph $\cal R_\cal A$ (a ``rose'') which canonically realizes $\cal A^*$ geometrically, and such a subdivision is a homeomorphism and hence induces a bijection between the sets of biinfinite paths that realize the elements of $\cal A^\Z$ and those of $\sigma_{\rm II}(\cal A^\Z)$.

\smallskip
\noindent
(2)
Any word $w \in \cal L(Y) \subset \cal A_{\rm II}^*$ of odd length $|w| = 2n-1$ must either be the prefix or the suffix of the image 
$\sigma_{\rm II}(u)$ for some word $u \in \cal L(X)$ with $|u| = n$. This proves the first of the two claimed equalities.

Similarly, any word $w \in \cal L(Y) \subset \cal A_{\rm II}^*$ of even length $|w| = 2n$ must either be equal to the 
image 
$\sigma_{\rm II}(u)$ for some word $u \in \cal L(X)$ with $|u| = n$, or else it is equal to $\sigma_{\rm II}(u)\chop_1$ for some word $u \in \cal L(X)$ with $|u| = n+1$. This proves the second of the two equalities.
\end{proof}

It now suffices to consider any subshift $X$ with sufficiently fast growing complexity function $p_X$. For instance, assume $p_X(n) = 
e^{Cn}$ for some constant $C > 1$, as is true for any subshift $X$ of finite type (an ``SFT''). We compute:
$$\frac{p_X(2n-1)}{p_Y(2n-1)} = \frac{p_X(2n-1)}{2 p_X(n)} = 
\frac{e^{C(2n-1)}}{2  e^{Cn}} = \frac{1}{2} e^{C(n-1)}$$
Hence ``$p_X$ is not $\cal O(p_Y)$'', as an analyst would say, and in particular they belong to distinct $\Theta$-growth-equivalence-classes. This proves statement (4) of Theorem \ref{1.1}.

\medskip

We finish this section by placing the proof scheme used above into a slightly more general context, so that it can be readily used in the next section. Recall the definition of $||\sigma||$ and $\langle \sigma \rangle$ from subsection \ref{sec:2}.

\begin{lem}
\label{5.3}
Let $\sigma: \cal A^* \to \cal B^*$ a non-erasing morphism, and let $X \subset \cal A^\Z$ be any subshift over $\cal A$, with image subshift $Y := \sigma(X) \subset \cal B^\Z$. 
If $X$ has exponential complexity function $p_X(n) = e^{Cn}$ for some constant $C > 1$, then one has:
$$\frac{p_X(\langle \sigma \rangle \cdot n)}{p_Y(\langle \sigma \rangle \cdot n)} \geq  
\frac{1}{||\sigma||} e^{C(\langle \sigma \rangle -1) \cdot n - (\langle \sigma \rangle + C-2)} $$
In particular, if all letters $a_i \in \cal A$ have images of length $|\sigma(a_i)| \geq 2$, then one has:
\begin{equation}
\label{eq5.3}
p_X \notin \cal O(p_{Y})
\end{equation}
\end{lem}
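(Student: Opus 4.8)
The plan is to reduce everything to the upper bound already established in Lemma~\ref{X.1} and then substitute the hypothesised exponential form $p_X(n)=e^{Cn}$. Lemma~\ref{X.1} furnishes, for every integer $n\in\N$, the estimate $p_Y(\langle \sigma \rangle \cdot (n-1)+1)\le ||\sigma||\cdot p_X(n)$, together with the weaker overall chain $p_Y(n)\le ||\sigma||\cdot p_X(n)$. The quantity I actually need to control is $p_Y(\langle \sigma \rangle \cdot n)$, so the first task is to massage the index in Lemma~\ref{X.1} so that its left-hand side dominates $p_Y(\langle \sigma \rangle \cdot n)$.

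First I would apply Lemma~\ref{X.1} with $n$ replaced by $n+1$, which gives
$$p_Y(\langle \sigma \rangle \cdot n+1)\,=\,p_Y(\langle \sigma \rangle\cdot((n+1)-1)+1)\,\le\,||\sigma||\cdot p_X(n+1).$$
Since $\langle \sigma \rangle \cdot n\le \langle \sigma \rangle\cdot n+1$, the monotony of $p_Y$ yields $p_Y(\langle \sigma \rangle\cdot n)\le ||\sigma||\cdot p_X(n+1)$. Plugging in $p_X(m)=e^{Cm}$ and dividing then produces
$$\frac{p_X(\langle \sigma \rangle\cdot n)}{p_Y(\langle \sigma \rangle\cdot n)}\,\ge\,\frac{e^{C\langle \sigma \rangle n}}{||\sigma||\cdot e^{C(n+1)}}\,=\,\frac{1}{||\sigma||}\,e^{C(\langle \sigma \rangle-1)n-C}.$$
It remains to compare the exponent $C(\langle \sigma \rangle-1)n-C$ with the stated exponent $C(\langle \sigma \rangle-1)n-(\langle \sigma \rangle+C-2)$: their difference equals $\langle \sigma \rangle-2$, which is $\ge 0$ precisely when $\langle \sigma \rangle\ge2$, so in that case the bound I obtain is at least as strong as the claimed one. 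The degenerate case $\langle \sigma \rangle=1$ I would dispatch separately, using the full chain $p_Y(n)\le ||\sigma||\cdot p_X(n)$ of Lemma~\ref{X.1}, which already gives $p_X(n)/p_Y(n)\ge 1/||\sigma||\ge e^{1-C}/||\sigma||$ because $C>1$; and $e^{1-C}=e^{-(\langle\sigma\rangle+C-2)}$ when $\langle\sigma\rangle=1$, so this is exactly the stated inequality there.

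For the \emph{in particular} clause, the hypothesis $|\sigma(a_i)|\ge2$ for all $a_i\in\cal A$ means precisely $\langle \sigma \rangle\ge2$, so the coefficient $C(\langle \sigma \rangle-1)$ of $n$ in the exponent is strictly positive. Hence the right-hand side of the displayed inequality tends to $+\infty$ as $n\to\infty$, i.e.\ the ratio $p_X(\langle \sigma \rangle\cdot n)/p_Y(\langle \sigma \rangle\cdot n)$ is unbounded along the subsequence of indices $\langle \sigma \rangle\cdot n$. This is incompatible with $p_X\in\cal O(p_Y)$, which would force $p_X(m)/p_Y(m)$ to stay bounded for all sufficiently large $m$; therefore $p_X\notin\cal O(p_Y)$.

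The only genuinely delicate point is the index bookkeeping in the second step: one must shift to $n+1$ (rather than $n$), so that the argument of $p_Y$ on the left of Lemma~\ref{X.1} lands just above $\langle \sigma \rangle\cdot n$, and then invoke monotony in the correct direction. Matching the exact additive constant $\langle \sigma \rangle+C-2$ is not essential, since the cleaner bound with $-C$ in the exponent already dominates it for $\langle \sigma \rangle\ge2$; the only reason to retain the weaker constant is to have a single formula valid for all $\langle \sigma \rangle\ge1$, and at the boundary $\langle \sigma \rangle=1$ one must use the sharper estimate $p_Y(n)\le ||\sigma||\cdot p_X(n)$ rather than the index-shifted one.
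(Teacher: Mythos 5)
Your proof is correct and follows essentially the same route as the paper's: both arguments consist of a single application of Lemma~\ref{X.1} followed by substitution of $p_X(n)=e^{Cn}$ and a comparison of exponents. In fact your version is slightly more careful than the paper's own proof, which establishes the displayed ratio at the index $\langle \sigma \rangle \cdot (n-1)+1$ rather than at $\langle \sigma \rangle \cdot n$ as stated in the lemma; your index shift to $n+1$ together with monotony of $p_Y$, and your separate treatment of the boundary case $\langle \sigma \rangle = 1$, close exactly that small bookkeeping gap, and the ``in particular'' conclusion is drawn correctly since unboundedness of $p_X/p_Y$ along the subsequence $m=\langle \sigma \rangle \cdot n$ already rules out $p_X \in \cal O(p_Y)$.
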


\begin{proof}
If $p_X(n) = e^{Cn}$, then one deduces directly from 
Lemma \ref{X.1}:
$$\frac{p_X(\langle \sigma \rangle \cdot (n-1) +1)}{p_Y(\langle \sigma \rangle \cdot (n-1) +1)} \geq \frac{p_X(\langle \sigma \rangle \cdot (n-1) +1)}{||\sigma|| \cdot p_X(n)} = \frac{e^{C\langle \sigma \rangle \cdot (n-1) +1}}{||\sigma|| \cdot e^{Cn}} = \frac{1}{||\sigma||} e^{C(\langle \sigma \rangle -1) \cdot n - (\langle \sigma \rangle + C-2)} $$
\end{proof}

\begin{rem}
\label{5.4}
From the last proof 
we deduce that statement (\ref{eq5.3}) holds also for subshifts $X$ with complexity function $p_X$ that grows slower than exponential:  it suffices 
for instance that $p_X(n) \in \Theta(n^{g(n)})$ for any unbounded function $g: \N \to \N$, 
as long as one has $\langle \sigma \rangle \geq 2$. 
\end{rem}


\section{Entropy and free group laminations}
\label{sec:5}

Recall 
from (\ref{eq1.3}) 
that the topological entropy of any subshift $X \subset \cal A^\Z$ satisfies $h_X = \lim \frac{\log(p_X(n))}{n}$. We now deduce from Theorem \ref{1.1}:

\begin{cor}
\label{5.1}
Let $\cal A$ and $\cal B$ be non-empty finite alphabets, and let $X \subset \cal A^\Z$ be any subshift. Let $\sigma: \cal A^* \to \cal B^*$ be a non-erasing morphism of free monoids, and let $Y:= \sigma(X)$ be the image subshift of $X$. Then the topological entropies $h_X$ and $h_Y$ satisfy the following:
\begin{enumerate}
\item
Without any further hypotheses one has:
$$h_Y \,\, \leq \,\, h_X$$
\item
If $\sigma$ recognizable in $X$ and letter-to-letter, then one has: 
$$h_Y \,\, = \,\, h_X$$
\item
There exist subshifts $X \subset \cal A^\Z$ and morphisms $\sigma$ which are recognizable in $X$ 
(but not letter-to-letter) 
such that one has:
$$h_Y \,\, < \,\, h_X$$
\end{enumerate}
\end{cor}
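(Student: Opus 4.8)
The plan is to read off all three statements directly from the defining limit (\ref{eq1.3}) for the topological entropy, namely $h_X = \lim_{n\to\infty} \frac{\log p_X(n)}{n}$ (and likewise for $h_Y$), by feeding in the complexity inequalities already established in Theorem \ref{1.1}. The guiding observation is that taking logarithms turns the multiplicative constants $c$ and $C$ appearing in those inequalities into additive terms $\log c$ and $\log C$, which are then annihilated by the division by $n$ in the limit; the only genuine arithmetic will happen in part (3), where the morphism rescales word-lengths and thereby actually changes the value of the limit.

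For part (1), I would invoke part (1) of Theorem \ref{1.1}, which gives $p_Y(n) \leq C \cdot p_X(n)$ for all $n$ with $C = ||\sigma||$. Dividing the inequality $\log p_Y(n) \leq \log C + \log p_X(n)$ by $n$ and letting $n \to \infty$, the term $\frac{\log C}{n}$ vanishes and one obtains $h_Y \leq h_X$. For part (2), I would additionally invoke part (2) of Theorem \ref{1.1}, which under the stated hypotheses yields a constant $c > 0$ with $c \cdot p_X(n) \leq p_Y(n)$ for all large $n$. Sandwiching $\log c + \log p_X(n) \leq \log p_Y(n) \leq \log C + \log p_X(n)$, dividing by $n$, and passing to the limit squeezes $\frac{\log p_Y(n)}{n}$ between two quantities both tending to $h_X$, so $h_Y = h_X$.

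Part (3) is the substantive statement, and here I would reuse the counter-example constructed in Section \ref{sec:4}. Concretely, take $X = \cal A^\Z$ the full shift over an alphabet with $k = \card(\cal A) \geq 2$ letters, so that $p_X(n) = k^n$ and $h_X = \log k > 0$, and take $\sigma = \sigma_{\rm II}$ the subdivision morphism $a_i \mapsto a_i^- a_i^+$, which by Lemma \ref{4.2}(1) is recognizable in $X$ and is visibly not letter-to-letter. By Lemma \ref{4.2}(2) one has $p_Y(2n-1) = 2k^n$ and $p_Y(2n) = k^n + k^{n+1}$. Computing the entropy of $Y$ along the odd and the even subsequences separately,
$$\frac{\log p_Y(2n-1)}{2n-1} \longrightarrow \frac{\log k}{2} \quad\text{and}\quad \frac{\log p_Y(2n)}{2n} \longrightarrow \frac{\log k}{2}\,,$$
both limits agree, so $h_Y = \tfrac{1}{2}\log k = \tfrac{1}{2} h_X < h_X$. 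The main point — and the only place where anything beyond bookkeeping occurs — is the conceptual reason this works: the subdivision morphism stretches every word by a factor of $2$ while leaving its information content essentially unchanged (a word of length $2n$ in $Y$ corresponds, up to a bounded boundary ambiguity, to a single word of length $n$ in $X$), so the per-symbol growth rate, and hence the entropy, is halved. The only mild care needed is to check that the odd and even subsequences produce the same limit, which the displayed computation confirms; since $h_X > 0$, the resulting inequality is strict.
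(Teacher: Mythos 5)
Your proposal is correct and follows essentially the same route as the paper: parts (1) and (2) are read off from Theorem \ref{1.1} by taking logarithms and passing to the limit, and part (3) uses the identical counter-example ($X = \cal A^\Z$ with $\sigma = \sigma_{\rm II}$ via Lemma \ref{4.2}). Your additional check of the even subsequence $p_Y(2n)$ is a small extra verification the paper omits, but it does not change the argument.
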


\begin{proof}
Statements (1) and (2) are 
immediate consequences of the statements (1) and (2) of Theorem \ref{1.1}. For statement (3) we pick any alphabet $\cal A$ with $\card(\cal A) \geq 2$ and consider the full shift $X = \cal A^\Z$, which satisfies $p_X(n) = \card(\cal A)^n$. 
We set $\sigma = \sigma_{\rm II}$ and note that in Lemma \ref{4.2} (1) it has been shown that $\sigma_{\rm II}$ is recognizable in $X$. 
We then deduce from Lemma \ref{4.2} (2) that $h_X = \lim \frac{\log(\tiny\card(\cal A)^n)}{n} = \log (\card(\cal A))$ while $h_Y = \lim \frac{\log(p_Y(2n-1))}{2n-1} = \lim \frac{\log(2 \, \tiny\card(\cal A)^n)}{2n-1} = \frac{1}{2}\log( \card(\cal A))\, $.
\end{proof}

\medskip

We now turn our attention to the free group $F(\cal A)$ over the alphabet $\cal A$ as basis, which contains the free monoid $\cal A^*$, and the 
canonical 
inclusion $\cal A^* \to F(\cal A)$ is a multiplicative morphism. This set-up, however, is rather treacherous, as the free group $F_N$ of finite rank 
$N := \card(\cal A) \geq 2$ has infinitely many distinct bases, and (contrary to what one is used to from free monoids) none of them is preferred over the others, so that 
in any 
``symbolic dynamics approach'' to free groups 
one has to 
seriously 
take 
basis changes into account.

\smallskip

This has led to the basis-free notions of {\em algebraic laminations} and {\em currents} for any free group $F_N$ (see sections 3 and 10 of \cite{BHL1} 
and the references given there). 
Any choice of a basis $\cal A$ of $F_N$ associates canonically to any algebraic lamination $L$ a subshift $X_\cal A(L)$ and to any current $\mu$ on $F_n$ an invariant measure $\hat \mu$ on the subshift $X_\cal A(\supp(\mu))$, so that the choice of $\cal A$ defines canonically a bijection (or rather ``a homeomorphism'') between the set of subshifts 
over the letters of $\cal A$ (and their inverses !) 
and the set of algebraic laminations in $F(\cal A)$. Similarly, the choice of $\cal A$ establishes a bijection between invariant measures on such subshifts and currents on $F(\cal A)$.

\smallskip

It is hence natural to attempt 
carrying over 
to algebraic laminations 
the tools developed for subshifts, 
where the basic obstruction, the existence of inverses in $F(\cal A)$, is overcome by doubling the alphabet $\cal A$ through passing to $\cal A_\pm = 
\cal A \cup \cal A^{-1}$, for $\cal A^{-1} = \{ \alpha_i \mid \alpha_i^{-1} \in \cal A\}$. 
One then represents 
every element of $F(\cal A)$ by the well defined corresponding reduced word in $\cal A_\pm\,$, and an algebraic lamination $L$ is 
given by any subshift $X_\pm = X_\cal A(L)$  that consists of reduced biinfinite words in $\cal A\cup \cal A^{-1}$\,. Here ``reduced'' means that the subshift $X_\pm$ 
must be contained in the SFT in $\cal A_\pm^\Z$ defined by forbidding any $a_i \alpha_i$ or $\alpha_i a_i$ as factor.

\begin{rem}
\label{X.r}
Any subshift $X \subset \cal A^\Z$ determines canonically an algebraic lamination $L$ which in turn defines the subshift $X_\pm := X_\cal A(L) \subset \cal A_\pm^\Z$. In this special case $X_\pm$ is the ``double'' of $X$ in that it consists precisely of any
${\bf x} = \ldots x_{n-1} x_n x_{n+1} \ldots \in X$ together with its ``inverse'' ${\bf \bar x} = \ldots x^{-1}_{n+1} x^{-1}_n x^{-1}_{n-1} \in (\cal A^{-1})^\Z$.
In particular this implies
$$p_{X_\pm}(n) \,\, = \,\, 2 \, p_X(n)$$
for all $n \in \N$.
\end{rem}

For any change of the basis $\cal A$ to another basis $\cal B$ of $F_N$ there is a well known (``Cooper's'') cancellation bound $C(\cal B, \cal A) \geq 0$ such that for any algebraic lamination $L$ we have
\begin{equation}
\label{eq5.1}
w \in \cal L(X_\cal A(L))\,\, \Longrightarrow \,\,  \phi_{\cal B, \cal A}(w) \chop_{C(\cal B, \cal A)} \in \cal L(X_\cal B(L)) \, ,
\end{equation}
where $\phi_{\cal B, \cal A}(w)$ is the reduced word in $\cal B_\pm$ that represents the same element of $F_N$ as the reduced word $w$ in $\cal A_\pm$. One also has
\begin{equation}
\label{eq5.2}
\frac{1}{||\phi_{\cal A, \cal B}||} \cdot |w| \,\, \leq \,\, |\phi_{\cal B, \cal A}(w)| \,\, \leq \,\, ||\phi_{\cal B, \cal A}|| \cdot |w|
\end{equation}
for any reduced word $w$ in $\cal A_\pm$, with 
$$||\phi_{\cal B, \cal A}|| := \max\{|\phi_{\cal B, \cal A}(a_i)|) \mid a_i \in \cal A\} \, .$$

We can now start the last yet missing proof from the Introduction:

\begin{proof}[Proof of Proposition \ref{5.2}]
(1)
Before starting the formal logics of the proof of statement (1), we first need to state the following general observation:

For any integer $K \geq 0$ and any reduced word $u \in \cal A_\pm^*$ the reduced word 
$\phi_{\cal B, \cal A}(u\chop_K) \in \cal B_\pm^*$, which represents the same element of $F_N$ as the chopped word $u\chop_K$, must contain the chopped word 
$\phi_{\cal B, \cal A}(u)\chop_{K \cdot ||\phi_{\cal B, \cal A}||}$ as factor.
It follows that for any integer $K' \geq 0$ the word $\phi_{\cal B, \cal A}(u)\chop_{K \cdot ||\phi_{\cal B, \cal A}|| + K'}$ is a factor $y_{s+1} y_{s+2} \ldots y_t$ of $\phi_{\cal B, \cal A}(u\chop_K)\chop_{K'} =: y_1 y_2 \ldots y_r$.

We also note that $|\phi_{\cal B, \cal A}(u\chop_K)| \leq |\phi_{\cal B, \cal A}(u)| + 2K \cdot ||\phi_{\cal B, \cal A}||\,$, so that 
for sufficiently large $|u|$ 
we have:
$$|\phi_{\cal B, \cal A}(u\chop_K)\chop_{K'}| - |\phi_{\cal B, \cal A}(u)\chop_{K \cdot ||\phi_{\cal B, \cal A}|| + K'}| 
$$
$$\leq |\phi_{\cal B, \cal A}(u)| + 2K \cdot ||\phi_{\cal B, \cal A}||- 2K' - (|\phi_{\cal B, \cal A}(u)| - {2K \cdot ||\phi_{\cal B, \cal A}|| - 2K'})
$$
$$= {4K \cdot ||\phi_{\cal B, \cal A}||}
$$
It follows for the factor $y_{s+1} y_{s+2} \ldots y_t = \phi_{\cal B, \cal A}(u)\chop_{2K \cdot ||\phi_{\cal B, \cal A}|| + K'}$ of $y_1 y_2 \ldots y_r = \phi_{\cal B, \cal A}(u\chop_K)\chop_{K'}$ that
\begin{equation}
\label{eqX.x}
0 \,\, \leq \,\, s \,\,\leq\,\, 4K \cdot ||\phi_{\cal B, \cal A}||\, 
\end{equation}
so that there are at most $4K \cdot ||\phi_{\cal B, \cal A}||+1$ such factors.

\smallskip

In order to prove now statement (1) we first observe from the implication (\ref{eq5.1}) that for any (long) reduced word $w \in \cal L(Y_\pm)$ and its correspondent $u :=\phi_{\cal A, \cal B}(w)$  the chopped word $u' := 
u\chop_{C(\cal A, \cal B)}$ belongs to $\cal L(X_\pm)$. 
By the same argument, the word 
$\phi_{\cal B, \cal A}(u') \chop_{C(\cal B, \cal A)}$ belongs to $\cal L(Y_\pm)$. 

We now apply the above ``general observation'' with $K = C(\cal A, \cal B)$ and $K' = C(\cal B, \cal A)$ to deduce that the word $\phi_{\cal B, \cal A}(u)\chop_{C(\cal A, \cal B) \cdot ||\phi_{\cal B, \cal A}|| + C(\cal B, \cal A)}$ is a factor of $\phi_{\cal B, \cal A}(u\chop_{C(\cal A, \cal B)})\chop_{C(\cal B, \cal A)} 
\in \cal L(Y_\pm)$. But since $u = \phi_{\cal A, \cal B}(w)$, we have $\phi_{\cal B, \cal A}(u) = w$, so that we have now shown:
\begin{equation} 
\label{eqZ.4}
\text{for any $w \in \cal L(Y_\pm)$ the word 
$w\chop_{C(\cal A, \cal B) \cdot ||\phi_{\cal B, \cal A}|| + C(\cal B, \cal A)}$ is a factor of $\phi_{\cal B, \cal A}(u')\chop_{C(\cal B, \cal A)}\,$,}
\end{equation}
where 
$u' \in \cal L(X_\pm)$ with $u' = \phi_{\cal A, \cal B}(w)\chop_{C(\cal A, \cal B)}\,$. 

We now set 
$h := C(\cal A, \cal B) \cdot ||\phi_{\cal B, \cal A}|| + C(\cal B, \cal A)$ 
and
observe $|\phi_{\cal B, \cal A}(u')\chop_{C(\cal B, \cal A)}| \leq ||\phi_{\cal B, \cal A}|| \cdot |u'| \leq ||\phi_{\cal B, \cal A}|| \cdot ||\phi_{\cal A, \cal B}|| \cdot |w| =: m$. We can hence 
apply inequality (\ref{eqX.x}) to deduce from (\ref{eqZ.4}):
\begin{equation} 
\label{eqZ.4b}
\text{for any $w \in \cal L(Y_\pm)$ the word 
$w' := w\chop_h$ 
is a factor of $\phi_{\cal B, \cal A}(u'')$ for 
some $u'' \in \cal L(X_\pm)\,$,} 
\end{equation}
where the word $u''$ has length $|u''| = m$, 
and the factor $w'$ is 
``particular'' 
in that it occurs in a prefix of $\phi_{\cal B, \cal A}(u'')\chop_{C(\cal B, \cal A)}$ of length at most $s: = |w| + 4C(\cal A, \cal B) \cdot ||\phi_{\cal B, \cal A}||\,$, 
so that there are at most $4C(\cal A, \cal B) \cdot ||\phi_{\cal B, \cal A}||+1$ such particular factors.

We now notice that for any integer $n \geq 1$ and any word $w' \in \cal L(Y_\pm)$ of length $|w'| = n$ there is a prolongation $w \in \cal L(Y_\pm)$ of length $|w| = n + 2h$ such that $w' = w\chop_h$. From (\ref{eqZ.4b}) we know that there exists a word $u'' \in \cal L(X_\pm)$ of length $|u''| = m$ such that $w'$ is one of $4C(\cal A, \cal B) \cdot ||\phi_{\cal B, \cal A}||+1$ particular factors of $\phi_{\cal B, \cal A}(u'')$. It follows that for any choice of $u''$ there are at most $4C(\cal A, \cal B) \cdot ||\phi_{\cal B, \cal A}||+1$ distinct possible words $w' \in \cal L(Y_\pm)$ of length $|w'| = n$. This proves
$$p_{Y_\pm}(n) \,\,\leq \,\, (4\, C(\cal A, \cal B)\, ||\phi_{\cal B, \cal A}||+1) \cdot p_{X_\pm}(m)$$
for $m = ||\phi_{\cal B, \cal A}|| \cdot ||\phi_{\cal A, \cal B}|| \cdot (n + 2h)
=
||\phi_{\cal B, \cal A}|| \cdot ||\phi_{\cal A, \cal B}|| \cdot n + 2C(\cal A, \cal B) \cdot ||\phi_{\cal B, \cal A}||^2 \cdot ||\phi_{\cal A, \cal B}|| + 2C(\cal B, \cal A))\,||\phi_{\cal B, \cal A}|| \cdot ||\phi_{\cal A, \cal B}||$.

This shows 
that for sufficiently large $n \in \N$ 
the right inequality of statement (1) holds 
for $C =  4\, C(\cal A, \cal B)\, ||\phi_{\cal B, \cal A}||+1$ and $D = ||\phi_{\cal B, \cal A}|| \cdot ||\phi_{\cal A, \cal B}|| + 1$. The left inequality follows directly from the right one, by the symmetry between the two bases $\cal A$ and $\cal B$ of $\FN$ 
and by the monotony of the complexity function $p_{X_\pm}$.

\smallskip
\noindent
(2)
In order to prove statement (2) we can not quite use the counter-example from section \ref{sec:4}, since the morphism $\sigma_{\rm II}$ used there is not invertible. But a small modification of the same proof idea will do: it suffices 
to consider any morphism
$\sigma: \cal A^* \to \cal B^*$ which is invertible and where $|\sigma(a_i)| \geq 2$ holds for all $a_i \in \cal A$. This is the case for instance for the square (or any higher power) of the celebrated Fibonacci substitution $\sigma_{\rm Fib}$ (given by $a_1 \mapsto a_2 \mapsto a_2 \, a_1$).
The invertibility of $\sigma$ assures that $\cal B$ is a second base for the free group $F(\cal A)$, so that, 
using the notation from Remark \ref{X.r}, the equality 
$\sigma(X) = Y$ implies $X_\pm = X_{\cal A}(L)$ and $Y_\pm = X_{\cal B}(L)$ for the algebraic lamination $L$ determined by $X$. 

In particular we know from from Remark \ref{X.r} that  $p_{X_\pm} = 2 p_X$ and $p_{Y_\pm} = 2 p_Y$.
We can hence apply Lemma \ref{5.3} to obtain 
particular subshifts $X$ and $Y = \sigma(X)$ 
with 
$p_{Y_\pm} \notin \cal O(p_{X_\pm})$ and thus $p_{Y_\pm} \notin \Theta(p_{X_\pm})$.

\smallskip
\noindent
(3)
 In order to prove statement (3) we use 
the same subshifts $X_\pm$ and $Y_\pm$ 
as in 
the above proof of (2) 
(which arose from ``doubling'' the subshifts $X$ and $Y$ given by Lemma \ref{5.3}). 
We readily compute 
$h_{X_\pm} = \lim \frac{\log(p_{X_\pm}(n))}{n} = \lim \frac{\log(2p_{X}(n))}{n} = \lim \frac{\log(2 e^{Cn})}{n} = C$ and 
(using Lemma \ref{X.1}) 
$h_{Y_\pm} = \lim \frac{\log(p_{Y_\pm}(n))}{n} = \lim \frac{\log(2p_Y(n))}{n} = \lim \frac{\log(2p_Y(\langle \sigma \rangle \cdot (n-1) +1))}{\langle \sigma \rangle \cdot (n-1) +1} \leq \lim \frac{\log(2 \cdot || \sigma || \cdot  p_X(n))}{\langle \sigma \rangle \cdot (n-1) +1} = \lim \frac{\log(2 \cdot || \sigma ||\cdot e^{Cn})}{\langle \sigma \rangle \cdot (n-1) +1} 
= 
\frac{C}{\langle \sigma \rangle}$.  
We now use again the above mentioned  Fibonacci substitution $\sigma_{\rm Fib}$ and specify 
$\sigma := \sigma_{\rm Fib}^2$ to ensure $\langle \sigma \rangle \geq 2$,
so that we have
$h_{Y_\pm} < h_{X_\pm}$. 

Hence there is no way to meaningfully use either the basis $\cal A$ nor the basis $\cal B$ of $F_2$ to define the topological entropy, for instance of the algebraic lamination $L$ represented by the ``oriented full shift'' $X_\pm = \cal A^\Z \cup (\cal A^{-1})^\Z$, which is much smaller than (and should not be confused with) the ``algebraic full shift'' 
of all reduced biinfinite words in 
$\cal A_\pm^\Z$; the latter would indeed be invariant under basis change.

It remains to show that $h_{X_\pm} = 0$ implies $h_{Y_\pm} = 0$  : we use statement (1) to compute $h_{Y_\pm} = \lim \frac{\log(p_{Y_\pm}(n))}{n} \leq \lim \frac{\log(C \cdot p_{X_\pm}(D \cdot n))}{n} = D\lim \frac{\log C + \log p_{X_\pm}(D \cdot n))}{D\cdot n} = D \cdot h_{X_\pm} = 0 $.

\smallskip

This completes the proof of all 3 claimed statements. 
\end{proof}

At the occasion of the Dyadisc4 conference in Amiens in July 2021, the author proposed the notion of ``{intrinsic} properties'' of subshifts  (see \cite{Lu-Q21}). The precise definition was at the time purposely left open, but in the mean time the following has stabilized to a version of this concept that seems well applicable in practice:

\begin{defn}
\label{5.5}
(1)
Two subshifts $X \subset \cal A^\Z$ and $Y \subset \cal B^\Z$ are {\em intrinsically equivalent} if there exists a  third subshift $Z \subset \cal C^\Z$ as well as 
non-erasing monoid morphisms $\sigma: \cal C^* \to \cal A^*$ and $\sigma': \cal C^* \to \cal B^*$ which are recognizable in $Z$ and satisfy $X = \sigma(Z)$ and  $Y = \sigma'(Z)$.

\smallskip
\noindent
(2)
A property $\cal P$ of 
subshifts 
is an {\em intrinsic property} 
if for any subshift $X$ which has the property $\cal P$ it follows that 
any subshift $Y$ intrinsically equivalent to $X$ must also 
have $\cal P$.
\end{defn}

It can be shown that the relation stated in Definition \ref{5.5} (1) is indeed transitive and hence defines an equivalence relation on the set of all subshifts over finite alphabets.
In this prospective, we can restate some of the results derived in this note as follows:

\begin{rem}
\label{5.6}
The value of the topological entropy $h_X$ and also the $\Theta$-equivalence class of the complexity function $p_X$ are in general {\em not} intrinsic properties of a given subshift $X$. 

However, the property $h_X = 0$ (or $h_X > 0$) is intrinsic, as is also the weaker equivalence class of $p_X$ defined by 
inequalities as in 
statement (1) of Proposition \ref{5.2}.
\end{rem}

It seems however possible that for some special zero-entropy classes of subshifts $X$, with particular, very slow growing complexity function, the class $\Theta(p_X)$ is after all an intrinsic property of $X$. We'd like to point to \cite{HL} for certain observations that indicate such a possibility.

\medskip

The annoyingly obtrusive combinatorics 
encountered in the above proof of statement (1) of Proposition \ref{5.2}, together with the comparative ``gouleyance'' of the proof of the analogous inequalities in Lemma \ref{X.1} and Proposition \ref{3.5}, inspired the author to the following optimistic quest:

\begin{conjecture}
\label{4.5+}
Let $\cal A$ and $\cal B$ be two bases of a free group $\FN$ of finite rank $n \geq 2$, and let $L$ be an algebraic lamination in $\FN$. Then the two subshifts $X_\pm := X_\cal A(L)$ and $Y_\pm := X_\cal B(L)$ are intrinsically equivalent.
\end{conjecture}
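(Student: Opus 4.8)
The plan is to produce, for the two realizations $X_\pm = X_{\mathcal A}(L)$ and $Y_\pm = X_{\mathcal B}(L)$, a common \emph{coarsening} subshift $Z$ over an alphabet $\mathcal C$ of ``blocks'' together with the two read-off morphisms required by Definition \ref{5.5}. Since the change of basis $\phi \in \Aut(\FN)$ with $\phi(\mathcal A) = \mathcal B$ factors as a product of elementary Nielsen automorphisms (permutations, inversions, and transvections $a_i \mapsto a_i a_j^{\pm 1}$), and since intrinsic equivalence is transitive (as asserted just after Definition \ref{5.5}), I would first reduce to the case where $\mathcal A$ and $\mathcal B$ differ by a single elementary automorphism: pick intermediate bases $\mathcal A = \mathcal A_0, \ldots, \mathcal A_k = \mathcal B$ with consecutive ones elementary, observe that each realizes the same lamination $L$ as $X_{\mathcal A_t}(L)$, and chain the equivalences. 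For a permutation or an inversion the two realizations differ merely by a flip-equivariant letter-to-letter relabeling of $\mathcal A_\pm$, so taking $Z = X_{\mathcal A_t}(L)$ with the identity and the relabeling gives two letter-to-letter morphisms which are recognizable in $Z$ by inspection.

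The substantive case is a transvection, say $b_i = a_i a_j$ and $b_k = a_k$ for $k \neq i$ (so $a_i = b_i b_j^{-1}$ and $a_k = b_k$ otherwise). Here I would cut each leaf of $L$ at its \emph{common vertices}, i.e. at those elements of $\FN$ occurring simultaneously as a prefix of its reduced $\mathcal A$-word and of its reduced $\mathcal B$-word. Between two consecutive common vertices a leaf spans a block that is at once a nonempty $\mathcal A_\pm$-word and a nonempty $\mathcal B_\pm$-word; letting $\mathcal C$ be the finite, flip-invariant set of such minimal blocks defines $Z \subset \mathcal C^{\Z}$, and the morphisms $\sigma : \mathcal C^* \to \mathcal A_\pm^*$ and $\sigma' : \mathcal C^* \to \mathcal B_\pm^*$ reading off the $\mathcal A$- resp. $\mathcal B$-word of each block are non-erasing and satisfy $\sigma(Z) = X_\pm$ and $\sigma'(Z) = Y_\pm$; concatenation produces no cancellation precisely because the cut positions are common vertices. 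The essential boundedness input is that common vertices are syndetic: a cut fails to be a common vertex only when the $\mathcal A$-word reads one of the short cancelling patterns (here $a_i a_j$ or $a_j^{-1} a_i^{-1}$) straddling it, and a direct check shows two such patterns cannot straddle consecutive cuts, so every block has $\mathcal A$-length at most $2$. This is where the bounded-cancellation phenomenon underlying Cooper's bound (\ref{eq5.1})--(\ref{eq5.2}) enters, in its most elementary form.

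It then remains to verify that $\sigma$ and $\sigma'$ are recognizable in $Z$. Using the characterization of Proposition \ref{3.0b}, or equivalently the repetition-bound reformulation of Proposition \ref{3.2}, I would argue that the block decomposition of a leaf is canonically reconstructible from its $\mathcal A$-word alone, since the cut positions are exactly those not straddled by one of the finitely many cancelling patterns, a condition depending only on a bounded window. Hence every biinfinite word of $X_\pm = \sigma(Z)$ lifts to $Z$ in at most one way, which is recognizability of $\sigma$ in $Z$; the symmetric argument, applied to the inverse transvection, handles $\sigma'$. Granting this for each elementary step, transitivity yields the intrinsic equivalence of $X_{\mathcal A}(L)$ and $X_{\mathcal B}(L)$, and combining with Theorem \ref{1.1}(1) and (3) applied to $\sigma$ and $\sigma'$ re-derives the two-sided comparison of Proposition \ref{5.2}(1) conceptually, bypassing the combinatorics of its direct proof.

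The hard part will be twofold. First, one must make the transitivity of Definition \ref{5.5}(1) genuinely rigorous: given common covers $Z_1$ for $(X,Y)$ and $Z_2$ for $(Y,W)$, one forms the fibre product over $Y$ by taking the common refinement of the two canonical block structures that recognizability imposes on the leaves of $Y$, and one must check that the resulting projections remain recognizable. This pullback of recognizable morphisms is the technical heart and is exactly what is deferred as ``can be shown'' after Definition \ref{5.5}. Second, it would be more satisfying to treat an arbitrary base change directly rather than through generators; but then the common vertices need not be syndetic a priori, since the $\mathcal B$-geodesic of a leaf may make long excursions off its $\mathcal A$-geodesic, and bounding the gaps between common vertices uniformly (as opposed to the immediate gap-$\leq 2$ estimate available for a single transvection) is the main analytic obstacle. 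This is the reason the generator-by-generator route, together with transitivity, is the safer path.
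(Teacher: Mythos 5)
You have written a proof attempt for a statement that the paper explicitly leaves open: Conjecture \ref{4.5+} is stated as a conjecture, with no proof offered, so there is no argument in the paper to compare yours against. Your strategy --- factor the basis change into elementary Nielsen automorphisms, handle each transvection by cutting every leaf of $L$ at the vertices common to its $\cal A$-geodesic and its $\cal B$-geodesic, read off the two coordinatizations of the resulting bounded blocks to get $\sigma, \sigma': \cal C^* \to \cal A_\pm^*, \cal B_\pm^*$ with $\sigma(Z)=X_\pm$ and $\sigma'(Z)=Y_\pm$, and then chain by transitivity --- is a sensible program, and your local analysis of the transvection $a_i \mapsto a_i a_j$ (the only cancelling patterns are $a_ia_j$ and $a_j^{-1}a_i^{-1}$, these cannot straddle consecutive cut points, hence blocks of $\cal A$-length at most $2$) is correct.

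However, as written this is a proof outline with two genuine gaps, both of which you yourself flag. First, the entire reduction to elementary automorphisms stands or falls with the transitivity of the relation in Definition \ref{5.5}~(1), which the paper only asserts (``It can be shown\dots'') and which you defer as ``the technical heart''; until the fibre-product-over-$Y$ construction is carried out and the composed projections are shown to remain recognizable, nothing is proved. Second, the recognizability of $\sigma'$ in $Z$ does not follow ``by the symmetric argument applied to the inverse transvection'' without further work: the inverse transvection applied to $Y_\pm$ produces its own block decomposition, and you must check it coincides with (or at least is compatible with) the one you built from the $\cal A$-side; a cleaner route is to verify shift-orbit injectivity and shift-period preservation directly via Proposition \ref{3.0b}, using that $Z$, $X_\pm$ and $Y_\pm$ all parametrize the same set of leaves of $L$. (One should also confirm that the set of block sequences is closed, so that $Z$ is actually a subshift.) In short: a credible plan of attack on the open conjecture, with the correct key combinatorial observation for a single transvection, but not yet a proof --- and since the paper proves Proposition \ref{5.2}~(1) by direct chopping estimates rather than through intrinsic equivalence, your approach, if completed, would indeed give the more conceptual re-derivation you claim.
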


One could actually go a step further, since 
for any non-erasing free monoid morphism $\sigma: \cal A^* \to \cal B^*$ 
the induced endomorphism $\phi_\sigma: F(\cal A) \to F(\cal B)$ 
is in general not injective, even if $\sigma$ is injective, or even if (a strictly stronger assumption !) $\sigma$ is recognizable in the full shift $\cal A^\Z$.

\begin{conjecture}
\label{4.6+}
Let $\cal A$ and $\cal B$ non-empty finite alphabets, and let $\phi: F(\cal A) \to F(\cal B)$ be a (not necessarily injective) homomorphism. Let $L$ be an algebraic lamination in $F(\cal A)$, and assume that $\phi$ is recognizable in $L$ (in the sense of Proposition \ref{3.0b}), so that there is a well defined algebraic image lamination $\phi(L)$ in $F(\cal B)$. Then the two subshifts $X_\pm := X_\cal A(L)$ and $Y_\pm := 
X_\cal B(\phi(L))$ are intrinsically equivalent. 
\end{conjecture}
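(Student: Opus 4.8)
The plan is to establish intrinsic equivalence in the sense of Definition~\ref{5.5} by producing a single common source subshift $Z$ together with two non-erasing morphisms, recognizable in $Z$, onto $X_\pm$ and $Y_\pm$ respectively. Recall first that if a non-erasing morphism $\rho$ is recognizable in a subshift $W$, then $W$ and $\rho(W)$ are automatically intrinsically equivalent (take $Z=W$, with the identity on one side and $\rho$ on the other); so the entire difficulty is that the map $w \mapsto \phi_{\cal B,\cal A}(w)$, sending a reduced $\cal A_\pm$-word to the reduced $\cal B_\pm$-word representing its $\phi$-image, is \emph{not} a monoid morphism, because of free-group cancellation. Using Remark~\ref{X.r} I would first reduce the bookkeeping of the $\pm$-doubling to the ambient reduced-word SFTs, and then realize $\phi$ topologically by a graph map $f\colon \cal R_{\cal A}\to\cal R_{\cal B}$ of the roses sending each edge $a_i$ to the reduced edge-path $\phi(a_i)$; the leaves of $L$ become biinfinite reduced paths in $\cal R_{\cal A}$, and $\phi(L)$ is read off by tightening their $f$-images.

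The injective case --- which in particular contains Conjecture~\ref{4.5+}, where $\cal A$ and $\cal B$ are two bases and $\phi$ is the change-of-basis isomorphism --- I expect to settle as follows. Cooper's bounded cancellation (\ref{eq5.1}) makes the reduced image $\phi_{\cal B,\cal A}(w)$ computable from a \emph{finite} window of $w$, so the obstruction to being a morphism is confined to a bounded neighbourhood of each letter. I would therefore pass to the $k$-block presentation $X^{[k]}$ of $X_\pm$ for $k$ large compared with the cancellation constant; by the Curtis--Hedlund--Lyndon mechanism recalled in Remark~\ref{2-notions} this recoding is a letter-to-letter morphism that is a conjugacy, hence recognizable, so $X^{[k]}$ is intrinsically equivalent to $X_\pm$. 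On $X^{[k]}$ the net reduced contribution of each central letter is now a well-defined function of its block, which I would package as a non-erasing morphism $\tau$ with $\tau(X^{[k]})=Y_\pm$; the two-sided linear bound (\ref{eq5.2}), available precisely because $\phi$ is injective, guarantees (after grouping blocks if necessary) that $\tau$ emits at least one $\cal B_\pm$-letter per block, and recognizability of $\tau$ in $X^{[k]}$ would follow from the recognizability of $\phi$ in $L$ via the repetition-bound criterion of Proposition~\ref{3.2} and the characterization of Proposition~\ref{3.0b}. Taking $Z=X^{[k]}$ then completes this case.

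The non-injective case is where I expect the real fight, and it is the reason the statement is offered only as a conjecture. When $\phi$ has a kernel the lower bound (\ref{eq5.2}) fails: whole sub-blocks of a leaf may collapse, so the per-block contribution of $\tau$ can be empty and $\tau$ ceases to be non-erasing, while re-anchoring the recoding to the $\cal B_\pm$-edges instead (to force non-erasure on the $Y_\pm$ side) makes the companion morphism onto $X_\pm$ erasing --- the expansion of $\phi$ and its cancellation pull in opposite directions. To break this tension I would replace the window-recoding by a \emph{geometric} common refinement: subdivide $\cal R_{\cal A}$ to a graph $G_0$ so that $f$ becomes simplicial (the subdivision morphism is recognizable in the full shift by Lemma~\ref{4.2}(1), so the path-subshift $Z_0$ of $G_0$ is intrinsically equivalent to $X_\pm$), and then factor the simplicial map through Stallings folds into an immersion $\hat f\colon G\to\cal R_{\cal B}$. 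An immersion carries reduced paths to reduced paths, so it induces a genuinely non-erasing, recognizable morphism identifying the leaf-subshift of $G$ with $Y_\pm$; all the cancellation has thereby been pushed onto the folding steps $G_0\to\cdots\to G$.

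The hard part, then, is a single Stallings fold: to show that folding two edges with a common image induces an intrinsic equivalence between the leaf-subshifts before and after the fold, even though along a leaf this fold may create backtracking that reduces, and in the non-injective case may annihilate arbitrarily long (but, by Cooper's bound, boundedly deep) sub-segments. Here I would lean squarely on Proposition~\ref{3.0b}: shift-orbit injectivity of $\phi$ in $L$ should forbid the annihilation of an entire half-leaf and supply the finite-to-one control needed to certify recognizability of the fold-induced morphism, while shift-period preservation handles the periodic leaves. Making this per-fold statement precise --- in particular verifying that non-erasure survives each collapse and that recognizability is preserved along the whole fold sequence --- is the one step I cannot at present carry out in full, and is exactly the ``last inequality'' at which a naive argument breaks down.
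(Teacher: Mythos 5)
The statement you are trying to prove is stated in the paper as a \emph{conjecture} (Conjecture \ref{4.6+}); the paper offers no proof of it, nor of the weaker Conjecture \ref{4.5+} (the injective, change-of-basis case), so there is nothing to compare your argument against except the paper's own admission that the question is open. Your proposal is a reasonable plan of attack, but it is not a proof, and you say so yourself: the per-fold lemma --- that a single Stallings fold induces an intrinsic equivalence of leaf-subshifts, with the fold-induced morphism remaining non-erasing and recognizable --- is exactly the missing step, and it is not a technicality. When $\phi$ is non-injective, an edge of an intermediate graph can have trivial tightened image, so the induced map fails to be a non-erasing monoid morphism at all, and Definition \ref{5.5} (which requires non-erasing morphisms recognizable in a common source $Z$) no longer applies; you have relocated the difficulty, not resolved it.

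There is also a gap earlier, in the injective case, which you treat as essentially done but which is precisely the content of the still-open Conjecture \ref{4.5+}. Passing to a $k$-block presentation $X^{[k]}$ and assigning to each block its ``net reduced contribution'' does not automatically yield a monoid morphism $\tau$ with $\tau(X^{[k]}) = Y_\pm$: you must show that the reduced image of an entire leaf is \emph{literally the concatenation} of the per-block outputs, i.e.\ that no cancellation occurs across block boundaries in your chosen apportionment, and that every block (or grouped block) emits at least one letter. Cooper's bound (\ref{eq5.1}) and the two-sided estimate (\ref{eq5.2}) make this plausible on average, but a consistent ``reading frame'' has to be constructed and its recognizability verified via Propositions \ref{3.0b} and \ref{3.2}; none of that is carried out. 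In short: your outline identifies the right obstructions (reading-frame consistency in the injective case, non-erasure and per-fold recognizability in the general case), but both remain unproven, so the conjecture stands as open after your proposal as before it.
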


Here the subshift $Y_\pm \subset \cal B_\pm^\Z$ can be directly derived from the subshift $X_\pm \subset \cal A_\pm^\Z$ via the equality $Y_\pm = 
\cal L(\phi_{\cal B, \cal A}(\cal L(X_\pm))$, where for any reduced word $w \in \cal A_\pm^*$ we denote by $\phi_{\cal B, \cal A}(w) \in \cal B_\pm^*$ the well defined reduced word in $\cal B_\pm$ which represents the $\phi$-image in $F(\cal B)$ of the element in $F(\cal A)$ that is represented by $w$.


\begin{thebibliography}{ABC}
\bibitem{BHL1}
N.~B\'edaride, A.~Hilion and M.~Lustig, 
{\em Graph towers, laminations and their invariant measures.} J. London Math. Soc. (2) {\bf 101} (2020), 1112--1172

\bibitem{BHL2.8-I}
N.~B\'edaride, A.~Hilion and M.~Lustig, 
{\em The measure transfer for subshifts, induced by  morphisms of free monoids.} Preliminary preprint, December 2021 (available upon request)

\bibitem{BSTY19}
V.~Berth\'e, W.~Steiner, J.~Thuswaldner, R.~Yassawi, {\em Recognizability for sequences of morphisms.} Ergodic Theory Dynam. Systems {\bf 39} (2019), 2896--2931

\bibitem{HL}
A.~Hilion and G.~Levitt, {\em A Pansiot-type subword complexity theorem for automorphisms of free groups.} In preparation

\bibitem{Lu-habil}
M.~Lustig, 
{\em Automorphismen von freien Gruppen.} 
Habilitationsschrift 1992, Ruhr-Universit\"at Bochum

\bibitem{Lu-Q21}
M.~Lustig,
{\em Questions at the occasion of the Dyadisc4-Conference in Amiens, July 2021.} https://dyadisc4.sciencesconf.org (see ``Poster'')

\end{thebibliography}
\end{document}